\newtheorem{theorem}{Theorem}[section]
\newtheorem{lemma}{Lemma}[section]
\newtheorem{corollary}{Corollary}[section]
\newtheorem{proposition}{Proposition}[section]
\newtheorem{definition}{Definition}[section]
\theoremstyle{remark}
\newtheorem{remark}{Remark}[section]
\newtheorem{example}{Example}[section]
\newtheorem{hypothesis}{Hypothesis}[section]
 \def\beqlb{\begin{eqnarray}}\def\eeqlb{\end{eqnarray}}
 \def\beqnn{\begin{eqnarray*}}\def\eeqnn{\end{eqnarray*}}
\def\N{\mathbb{N}}
\def\R{\mathbb{R}}
\renewcommand{\Phi}{\varPhi}
\renewcommand{\epsilon}{\varepsilon}
\definecolor{mygray}{gray}{0.9}
\definecolor{deeppink}{RGB}{255,20,147}
\definecolor{mygreen}{rgb}{0.05, 0.576, 0.03}
\definecolor{myred}{rgb}{0.768, 0.09, 0.09}
\newcommand{\red}{\color{red}}
\newcommand{\blue}{\color{blue}}
\long\def\symbolfootnote[#1]#2{\begingroup
\def\thefootnote{\fnsymbol{footnote}}\footnote[#1]{#2}\endgroup}
\def\comment#1{\setlength\marginparwidth{65pt} 
\marginpar{\raggedright\fontsize{7.81}{9}
\selectfont\upshape\hrule\smallskip
#1\par\smallskip\hrule}}
\title{Revisiting the identification problem of a function by the ratio of Laplace transforms of powers of the function}
\begin{document}

\author{Dongdong Hu\thanks{Department of Financial and Actuarial Mathematics,
Xi'an Jiaotong-Liverpool University, P.\ R.\ China}\,\,\thanks{Yiwu Industrial \& Commercial College, P.\ R.\ China}\, , \,Linglong Yuan\thanks{ Department of Mathematical Sciences,
University of Liverpool, UK}\, , \,Minzhi Zhao\thanks{ School of Mathematical Sciences, Zhejiang University, P.\ R.\ China}}
\date{\today}
\maketitle

\abstract{ The ratio of Laplace transforms of powers of a function arises in the context of empirical auction. The question whether a function is uniquely identified by this ratio has been answered affirmatively, if the function is non-negative, non-decreasing and right analytic.
This paper extends the result to a larger class of functions without monotonicity.
A conjecture in the literature says that all c\`adl\`ag functions can be identified by the ratio. We disprove this conjecture by
 providing simple functions that cannot be identified.
}

	\vspace{8pt} \noindent {\textit{Key words:} Identification problem, Laplace transform, Convolution,  Empirical auction}
	
	\noindent\textit{MSC (2020): } Primary 44A10,{ 26A99, 26E05}; secondary 91B26

\section{Introduction}
Laplace transform is a fundamental tool to characterise a real-valued function. More precisely, for any function $p:[0,\infty)\mapsto \R$  of exponential  order (i.e.\ there exist $c>0,C>0$ such that $|p(x)|\leq Ce^{cx}$ for any $x\geq 0$), the Laplace transform of $p$ is defined as
\[ \widehat p(\lambda):=\int_0^{\infty} e^{-\lambda x}p(x)dx,\]
where $\lambda$ is any complex number with the real part larger than $c$.
Then $\widehat p$ uniquely identifies $p$ (up to a set of Lebesgue measure zero).
In fact, $p$ can be expressed using $\widehat p$ via the inverse Laplace transform, see \cite{widder2015laplace}.

Recently, the ratio of Laplace transforms has been the object of study in the literature. For instance, authors of \cite{YANG2017,YANG2019821} discussed the relationship between the monotonicity of the ratio of two different functions $p(x)/q(x)$ in $x$ and the monotonicity of $\widehat{p}(\lambda)/\widehat{q}(\lambda)$ in $\lambda$.

In this paper, we revisit the ratio of Laplace transforms of different powers of a function, following \cite{konstantopoulos2021does}. Let $m,n$ be any distinct  integers in $\N:=\{1,2,\cdots\}$ and $p$ be of exponential order. The ratio of Laplace transform of powers (respectively $n,m$) of a function $p$ is then defined as
\[H_{n,m}(p,\lambda):=\frac{\widehat {p^n}(\lambda)}{\widehat {p^m}(\lambda)}.\]
The ratio $H_{n,m}(p,\cdot)$ has been studied on whether it can identify the function $p$ or not, see \cite{konstantopoulos2021does, luo2023identification}.
The identification  in this context bears a weaker sense, meaning whether
\begin{equation}\label{eqn:hh}H_{n,m}(p,\cdot)=H_{n,m}(q,\cdot)\end{equation}
implies $p=q$ (i.e.\ $p(x)=q(x)$ for any $x\geq 0$) for $n,m$ given, if we know some properties of $p$ and $q$. Note that here $H_{n,m}(p,\cdot)$ is a short writing of the function $H_{n,m}(p,\lambda)$ for $\lambda$ large enough, similarly for $H_{n,m}(q,\cdot)$.  We do not consider finding the explicit expression of $p$ using $H_{n,m}(p,\cdot)$, which is much harder. The identification problem can also be reformulated via a convolution equation. Indeed, \eqref{eqn:hh} is equivalent to
\begin{equation}\label{eqn:conv}
p^n*q^m=p^m*q^n,
\end{equation}
where $*$ denotes the convolution. The problem can then be stated as this: does the above display imply $p=q$ for given $n,m$, if we know some properties of $p,q$? In \cite{konstantopoulos2021does}, the authors mainly employed \eqref{eqn:conv},
 and in this paper we will explore \eqref{eqn:hh} which could help prove stronger results than that in \cite{konstantopoulos2021does}, mainly that $p,q$ do not need to be non-decreasing functions. 

The identification problem via $H_{n,m}(p,\cdot)$ is motivated by the studies of order statistics in empirical auction, see \cite[Section 3]{konstantopoulos2021does}. 
See also \cite{luo2022order, luo2019bidding, Athey2002,guerre2019,HU2013186,Elena2011,LI2000129,LI1998139,LUO2020354,luo2023identification} for more information on the background. In this field, the convolutional form \eqref{eqn:conv} is more commonly used than \eqref{eqn:hh}, see \cite{luo2022order}, although our focus in this paper is the latter.  In the context of empirical auction, $p$ is the cumulative distribution function of a non-negative random variable (see \cite{cho2020deconvolution} for justifications of non-negativity), entailing that $p$ is non-negative and non-decreasing. For such functions with the additional condition of being right analytic on $[0,\infty)$ (which means being right analytic on every point of this interval, see \cite[Page 3]{konstantopoulos2021does} for the definition of the latter), it is proved in \cite[Theorem 1.2, 3.1]{konstantopoulos2021does} that $H_{n,m}(p,\cdot)$ identifies $p$ for any given distinct positive integers $n,m$.  The assumption of right analyticity is also used in \cite{hernandez2020estimation}. We note that although \cite[Theorem 1.2]{konstantopoulos2021does} requires  functions to be c\`adl\`ag, we believe that this assumption can be dropped.

In the context of empirical auction, $p$ is bounded. In fact, we only need $p$ to be of exponential order for it to be identifiable, see \cite[Theorem 1.2]{konstantopoulos2021does}. If $p$ is a polynomial function, then $H_{n,m}(p,\cdot)$ also identifies $p$, see \cite[Theorem 1.1]{konstantopoulos2021does}. A conjecture was proposed in \cite{konstantopoulos2021does} to enlarge the class of identifiable functions:
\begin{equation}\label{eqn:c}\text{Conjecture: all c\`adl\`ag functions could be identified by the ratio with any distinct } n, m. \end{equation}

The aim of this paper is two-fold. Firstly, we prove results which generalise \cite[Theorem 1.1]{konstantopoulos2021does}
and \cite[Theorem 1.2]{konstantopoulos2021does}. In particular, the functions do not need to be non-decreasing, as opposed to that in \cite[Theorem 1.2]{konstantopoulos2021does}. Secondly, we provide some simple counterexamples which cannot be identified by $H_{n,m}(\cdot,\cdot)$ for some distinct $n,m$, thus disproving the conjecture above.

Before showing the main results in the next section, we shall first introduce the  assumptions below that hold throughout the paper, together with justifications. 

\vspace{3mm}



{\bf Assumption 1}:  {\it We shall
only consider functions $p$ that  satisfy
\begin{equation}\label{eqn:neq0}\inf\{x: p(x)\neq 0, x> 0\}=0.\end{equation}}

Indeed, as explained in \cite{konstantopoulos2021does}, if $p(x)=0$ for any $x\in[0,a]$ with some $a>0$, then
\begin{equation}\label{eqn:h=h}
 H_{n,m}(p,\cdot)=H_{n,m}(q,\cdot), \quad \forall n,m>0,
\end{equation}
where $q(x)=p(x+a), \forall x\geq 0$. To avoid this trivial case of having the same $H$ for different functions, we need to assume \eqref{eqn:neq0}.

{\bf Assumption 2}: {\it $n,m$ are co-prime in $H_{n,m}(\cdot,\cdot)$. }

If $n$ and $m$ are not co-prime, say $n=ak$ and $m=bk$, where $a,b,k$ are positive integers and $a\neq b$, then
\begin{equation*}
\begin{split}
 H_{n,m}(p,\lambda)=\frac{\widehat{p^n}(\lambda)}{\widehat{p^m}(\lambda)}=\frac{\widehat{p^{ak}}(\lambda)}{\widehat{p^{bk}}(\lambda)}=H_{a,b}(p^k,\lambda).
\end{split}
\end{equation*}
Then the best we can achieve is to identify $p^k$ with $a,b$ co-prime. Therefore we will just study the identification problem when $n,m$ are co-prime in $H_{n,m}(p,\cdot).$ 



{\bf Assumption 3}:  {\it $n>m$ in $H_{n,m}(\cdot,\cdot)$. }

Note that
\begin{equation*}
\begin{split}
 H_{n,m}(p,\lambda)=\frac{\widehat{p^n}(\lambda)}{\widehat{p^m}(\lambda)}=\frac{1}{H_{m,n}(p,\lambda)}.
\end{split}
\end{equation*}
Then $H_{n,m}(p,\cdot)=H_{n,m}(q,\cdot)$ is equivalent to $H_{m,n}(p,\cdot)=H_{m,n}(q,\cdot)$. For simplicity, we just assume $n>m.$



All the assumptions 1-3 are assumed in the paper. We will not mention them again to avoid repetition when presenting results. There are a few more notations to introduce. The inequality  $p\not=q$ means that there exists $x\geq 0$ such that $p(x)\not=q(x)$, similarly  for $p\not=0$.
We say $p$ is positive near zero if  there is $\varepsilon>0$ such that $p(x)>0$ for all $x\in(0,\varepsilon)$. For any right analytic function $p$ satisfying assumption 1,  there exist $a\neq 0$ and $k\geq 0$ such that $p(t)=at^k+o(t^{k+1})$ for $t\downarrow 0,$ which implies that $p$ or $-p$ is  positive near zero. For any set $A$, we use $\textbf{1}_A$ to denote the indicator function of $A$, that is,
\[\textbf{1}_{A}(t)=\left\{
            \begin{array}{ll}
              1, & \hbox{$t\in A$,} \\
              0, & \hbox{$t\notin A$.}
            \end{array}
          \right.\]

\section{Main results}
In this section, we provide conditions under which functions can be identified.
{
We only consider functions $p$ satisfying either $p(0)=1$ or $p(0)=0$.
Indeed, if $H_{n,m}(p,\cdot)=H_{n,m}(q,\cdot)$ and $p(0)\not=0$, then  we can write
$$p(0)^{n-m}H_{n,m}(\tilde p,\cdot)=H_{n,m}(p,\cdot)=H_{n,m}(q,\cdot)=p(0)^{n-m}H_{n,m}(\tilde q,\cdot),$$
where $\tilde p=p/p(0), \tilde q=q/p(0).$ Then $H_{n,m}(p,\cdot)=H_{n,m}(q,\cdot)$ is equivalent to $H_{n,m}(\tilde p,\cdot)=H_{n,m}(\tilde q,\cdot)$ with $\tilde p(0)=1$, which allows us to consider the equivalent question whether $\tilde p=\tilde q.$
In this section, we only consider  right analytic functions of exponential order defined on $[0,\infty)$.} Set $x_0:=(\frac{m}{n})^{1/(n-m)}\in (0,1)$ since $n>m\geq 1$.

\begin{theorem}\label{theorem1.1}
{Suppose that $p$ and $q$ are two right analytic functions of exponential order defined on $[0,\infty)$, with
$p(0)=1$,}
and  one of the following conditions holds:

\begin{enumerate}
\item[(1)] for all $t>0$, $p(t)> x_0 $ and $q(t)\ge  x_0$;

\item[(2)] for all $t>0$, $p(t)\ge 1$ and $q(t)>0$;

\item[(3)] $n$ is odd, $m$ is even,   $p(t)>1$ for all $t>0$. 

\end{enumerate}
Then if $H_{n,m}(p,\cdot)=H_{n,m}(q,\cdot)$,
we have $p=q$.
\end{theorem}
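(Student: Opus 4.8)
The plan is to start from the convolution reformulation \eqref{eqn:conv}, which by injectivity of the Laplace transform is equivalent to the hypothesis $H_{n,m}(p,\cdot)=H_{n,m}(q,\cdot)$; I take as my working identity
\[ p^n*q^m=p^m*q^n. \]
The whole argument is driven by a single observation: commutativity of convolution gives $q^n*q^m=q^m*q^n$, so writing $p=q+d$ with $d:=p-q$ and discarding the terms quadratic in $d$ should isolate a \emph{linear} equation for $d$ whose leading coefficient is governed by an elementary algebraic factor evaluated at the local value of $q$. The number $x_0$ will appear precisely as the place where that factor degenerates.

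First I would pin down $q(0)$. Expanding both sides to lowest order as $t\downarrow0$ and using $p(0)=1$, the constant parts of the integrands force $q(0)^m=q(0)^n$, so either $q(0)=0$ or $q(0)^{n-m}=1$. The case $q(0)=0$ is impossible: if $q(t)\sim bt^k$ with $b\neq0$, $k\ge1$ (such $k$ exists by Assumption 1, as $q\not\equiv0$), then the left side behaves like $b^m t^{km+1}$ and the right like $b^n t^{kn+1}$, and since $n>m$ these leading powers cannot agree. Hence $q(0)^{n-m}=1$. When $n-m$ is odd this gives $q(0)=1$ (this covers condition (3), where $n$ is odd and $m$ even); when $n-m$ is even one must additionally exclude the root $q(0)=-1$, which is ruled out by the sign hypotheses on $q$ (condition (1): $q\ge x_0>0$; condition (2): $q>0$). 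In all cases $q(0)=1$.

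The heart of the proof is a propagation argument. Set $c:=\sup\{s\ge0:\ p\equiv q \text{ on }[0,s]\}$, which is well defined since $p(0)=q(0)=1$; the goal is $c=\infty$. Assume $c<\infty$; by continuity $u_0:=p(c)=q(c)$, and by right analyticity I expand $p(c+\sigma)=\sum_k\alpha_k\sigma^k$, $q(c+\sigma)=\sum_k\beta_k\sigma^k$ with $\alpha_0=\beta_0=u_0$. If $p\not\equiv q$ beyond $c$, let $K\ge1$ be minimal with $\gamma_K:=\alpha_K-\beta_K\neq0$, so $d(c+\sigma)=\gamma_K\sigma^K+o(\sigma^K)$ while $d\equiv0$ on $[0,c]$. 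Substituting $p=q+d$ into the identity, cancelling $q^n*q^m-q^m*q^n=0$, and retaining only the linear-in-$d$ part gives, at $t=c+s$,
\[ n\,(q^{\,n-1}d)*q^m-m\,(q^{\,m-1}d)*q^n=o\big(s^{K+1}\big). \]
Because $d$ vanishes on $[0,c]$, each convolution reduces to an integral over the short window past $c$, where $q^{\,n-1}\to u_0^{\,n-1}$ and $q^{\,m-1}\to u_0^{\,m-1}$, while the remaining factor is $q$ evaluated near $0$ and tends to $q(0)=1$; evaluating the leading term yields
\[ \gamma_K\big(n\,u_0^{\,n-1}-m\,u_0^{\,m-1}\big)\frac{s^{K+1}}{K+1}+o\big(s^{K+1}\big)=0, \]
whence $\gamma_K\big(n u_0^{\,n-1}-m u_0^{\,m-1}\big)=0$.

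Finally I would show the algebraic factor never vanishes at the stalling value. Since $n u_0^{n-1}-m u_0^{m-1}=u_0^{m-1}(n u_0^{n-m}-m)$, it vanishes only for $u_0\in\{0,x_0\}$, together with $-x_0$ when $n-m$ is even. At $c=0$ one has $u_0=1$ and $n-m\neq0$, so the factor is nonzero and the argument already recovers $p=q$ near $0$; for $c>0$ each hypothesis forces $u_0=p(c)>x_0\ge0$, avoiding $\{0,x_0,-x_0\}$ (condition (1): $p>x_0$; condition (2): $p\ge1>x_0$; condition (3): $p>1>x_0$, with $n-m$ odd so $-x_0$ never occurs). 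Hence $\gamma_K=0$, a contradiction, and right analyticity gives $p\equiv q$ on $[c,c+\varepsilon)$, contradicting the maximality of $c$; therefore $c=\infty$ and $p=q$. I expect this last step to be the main obstacle: the delicate points are justifying that the quadratic-in-$d$ terms and the Taylor remainders of $q$ are genuinely $o(s^{K+1})$ (so that the displayed leading term is exact) and handling the right-analytic continuation across $c$ rigorously; the recognition that $x_0$ is exactly the degeneracy locus of $u_0\mapsto n u_0^{\,n-1}-m u_0^{\,m-1}$ is what unifies the three sign conditions under a single mechanism.
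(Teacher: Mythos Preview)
Your propagation strategy is the same as the paper's: define the first separation time $c$ and derive a local obstruction there. The paper implements this via Laplace transform asymptotics (Proposition~\ref{lemma2.3} and Lemma~\ref{lemma2.4}); your direct convolution expansion is the dual computation and, in the case you actually treat, reproduces part~(1) of Lemma~\ref{lemma2.4}. The gap is the sentence ``by continuity $u_0:=p(c)=q(c)$''. Right analytic functions on $[0,\infty)$ are only \emph{right}-continuous; the class under consideration expressly contains functions with jumps (see the counterexamples following the theorem and Remark~\ref{remark1.4}). Nothing prevents $p\equiv q$ on $[0,c)$ while $p(c)\neq q(c)$, and you have not excluded this.

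In that situation the minimal index is $K=0$, and your linearization fails: for $j\ge 2$ the term $\binom{n}{j}q^{\,n-j}d^{\,j}*q^m$ contributes at the same order $s^{2K+1}=s$ as the linear term, so the quadratic terms cannot be discarded. Keeping all orders, the coefficient of $s$ is $\sum_{j\ge 1}\binom{n}{j}q(c)^{n-j}\gamma_0^{\,j}-\sum_{j\ge 1}\binom{m}{j}q(c)^{m-j}\gamma_0^{\,j}$, which collapses to
\[
p(c)^{n}-p(c)^{m}=q(c)^{n}-q(c)^{m},
\]
precisely part~(2) of Lemma~\ref{lemma2.4}. Ruling this out under each hypothesis genuinely requires the global shape of $f(x)=x^{n}-x^{m}$ from Lemma~\ref{lemma2.1}: for condition~(1), strict monotonicity of $f$ on $[x_0,\infty)$; for~(2), that $f(u)\ge 0>f(v)$ whenever $u\ge 1$ and $0<v<x_0$; for~(3), that $f>0$ only on $(1,\infty)$ when $n$ is odd and $m$ is even, where $f$ is injective. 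The hypotheses of the theorem are calibrated to kill \emph{both} the equal-value case ($u^{n-m}=m/n$) and the jump case ($f(u)=f(v)$), and your proposal handles only the first.
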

Some counterexamples such that $p(0)=1, p\neq q$ and $H_{n,m}(p,\cdot)=H_{n,m}(q,\cdot)$ for some $n,m$:
\begin{enumerate}

\item[(a)]
$p(t)=e^{-t}\textbf{1}_{[0,T_1)}(t)+ce^{-(t-T_1)}\textbf{1}_{[T_1,T_1+T_2)}(t)+\frac{1}{2}\textbf{1}_{[T_1+T_2,\infty)}(t)$ with \[1/2<c<1 , T_1=-\ln(1-c), T_2=\ln(2c), \text{ and }
 q(t)=e^{-t}\textbf{1}_{[0,\ln 2 )}(t)+\frac{1}{2}\textbf{1}_{[\ln 2,\infty)}(t), n=2, m=1.\]


\item[(b)]  $p(t)=1$, $q(t)=\textbf{1}_{[0,1]}(t)$, $n,m$ arbitrary.

\item[(c)] $p(t)=e^{t}$, $q(t)=e^{t}\left[\textbf{1}_{[0,1)}(t)+(e^{-1}-1)\textbf{1}_{[1,\infty)}(t)\right], n=2, m=1$. %

\item[(d)] $p(t)=e^{t}$, $q(t)=e^{t}\textbf{1}_{[0,\ln2-\frac{1}{2}\ln3)}(t)-\frac{1}{\sqrt{3}}\textbf{1}_{[\ln2-\frac{1}{2}\ln3,\infty)}(t), n=3, m=1$. 


\end{enumerate}
The above counterexample are designed when we were thinking what happens if some part of a condition in Theorem \ref{theorem1.1} is not true. For instance, can we find a counterexample if there exists $t_0>0$ such that $p(t_0)\leq x_0$ with other parts in (1) unchanged (i.e. $q(t)\geq x_0$ for all $t\geq 0$)? A counterexample is given by $(a)$ where $\inf_{t\geq 0} p(t)=\lim_{t\uparrow T_1}p(t)=1-c\in(0,1/2)$ where $1/2=x_0$,  $\inf_{t\geq 0}q(t)=1/2$. Similarly if there exists $t_0>0$ such that $q(t_0)<x_0$ with other parts in (1) unchanged, a counterexample is given by {(b).}

As for condition (2), if $p(t_0)<1$ for some $t_0>0$, a counterexample is provided by {(a).} If $q(t_0)\leq 0$ for some $t_0>0$, a counterexample is provided by {(b).}

Finally for condition (3), { if $n$ is even, $m$ is odd, then a counterexample is given by (c);} if $n,m$ are both odd, then a counterexample is given by {(d).} If $p(t_0)\leq 1$ for some $t_0>0$, then a counterexample is given by {(b).}

\begin{theorem}\label{theorem1.2}
{Suppose that $p$ and $q$ are two right analytic functions of exponential order defined on $[0,\infty)$, with $p(0)=0$,}
and  one of the following conditions holds: 

\begin{enumerate}
\item[(1)]$n$ is odd, $m$ is even,  and $p(t)\neq0$ for all $t>0$;

\item[(2)]$n$ is odd, $m$ is odd,  $p(t)\neq0$ for all $t>0$, and $\inf\{t:p(t)q(t)>0\}=0$;

\item[(3)] $n$ is even,  and  $p(t)q(t)>0$  for all $t>0$.
\end{enumerate}
Then if $H_{n,m}(p,\cdot)=H_{n,m}(q,\cdot)$,
we have $p=q$.
\end{theorem}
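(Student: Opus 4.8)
The plan is to work throughout with the equivalent convolution identity \eqref{eqn:conv}, namely $p^n * q^m = p^m * q^n$, and to extract information by expanding both sides as power series at a point and matching coefficients. First I would record the local shape of $p,q$ at the origin: since $p(0)=0$ and both functions are right analytic and satisfy Assumption~1, we may write $p(t)=a t^k+o(t^{k+1})$ and $q(t)=b t^l+o(t^{l+1})$ as $t\downarrow 0$ with $a,b\neq 0$ and $k,l\geq 1$. Using $(s^\alpha * s^\beta)(t)=\mathrm{B}(\alpha+1,\beta+1)\,t^{\alpha+\beta+1}$, the leading terms of the two sides of \eqref{eqn:conv} have orders $t^{nk+ml+1}$ and $t^{mk+nl+1}$; equating them forces $(n-m)k=(n-m)l$, hence $k=l$ (and in particular $q(0)=0$). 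Matching the leading coefficients then gives, after cancelling the symmetric Beta factor, $a^{n}b^{m}=a^{m}b^{n}$, i.e.\ $a^{n-m}=b^{n-m}$. Here the three cases diverge. In cases (1) and (3) the exponent $n-m$ is odd (in (1) because $n$ is odd and $m$ even; in (3) because $n$ is even and $m$ odd by coprimality), so this already yields $a=b$. In case (2), with $n,m$ both odd, it only yields $a=\pm b$, and I would then invoke $\inf\{t:p(t)q(t)>0\}=0$: near zero this reads $ab\,t^{2k}>0$ on a sequence $t\downarrow 0$, forcing $ab>0$ and hence $a=b$.

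Next I would show by induction on $r\geq 1$ that $a_{k+r}=b_{k+r}$, where $a_i,b_i$ are the Taylor coefficients of $p,q$ at $0$; together with $a_k=b_k=a$ this gives $p=q$ on some $[0,\delta)$. Assuming $a_i=b_i$ for $i<k+r$, the difference $p^n-q^n$ first appears at order $nk+r$ with coefficient $n a^{n-1}(a_{k+r}-b_{k+r})$, and $p^m-q^m$ at order $mk+r$ with coefficient $m a^{m-1}(a_{k+r}-b_{k+r})$. Matching the coefficient of $t^{(n+m)k+r+1}$ in \eqref{eqn:conv}, the key observation is that all genuinely mixed contributions reassemble into the corresponding coefficients of $q^n*q^m$ and $q^m*q^n$, which are equal because convolution commutes, and therefore cancel; what remains is
\[
a^{\,n+m-1}\,(a_{k+r}-b_{k+r})\Big[n\,(nk+r)!\,(mk)!-m\,(mk+r)!\,(nk)!\Big]=0.
\]
Since $a\neq 0$ and, for $r\geq 1$, $n\prod_{j=1}^{r}(nk+j)>m\prod_{j=1}^{r}(mk+j)>0$ (because $n>m$ and $nk>mk$), the bracket is strictly positive, so $a_{k+r}=b_{k+r}$.

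Finally comes the local-to-global step. Set $T=\sup\{t: p=q \text{ on }[0,t]\}\geq\delta>0$ and suppose $T<\infty$; by continuity $p=q$ on $[0,T]$ and $v:=p(T)=q(T)$. Substituting $p=q$ on $[0,T]$ into \eqref{eqn:conv}, the part of the convolution supported on $[0,T]$ cancels through $q^n*q^m=q^m*q^n$, and the change of variables $s\mapsto T+\sigma$, $t\mapsto T+u$ reduces the identity, for $0<u<T$, to a convolution equation on $(0,T)$:
\[
\big(\delta_n * q^m\big)(u)=\big(\delta_m * q^n\big)(u),\qquad \delta_n(\sigma):=p^n(T+\sigma)-q^n(T+\sigma).
\]
Let $j\geq 1$ be the first order at which $p$ and $q$ differ to the right of $T$, so that $p(T+\sigma)-q(T+\sigma)\sim(c_j-d_j)\sigma^{j}$ with $c_j\neq d_j$. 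If $v\neq 0$, then $\delta_n(\sigma)\sim n v^{n-1}(c_j-d_j)\sigma^{j}$ with nonzero coefficient, while $q^m(\tau)\sim b^m\tau^{mk}$; hence the left side carries a nonzero term of order $u^{j+mk+1}$, whereas the right side, whose factors have orders $j$ and $nk$, only begins at order $u^{j+nk+1}>u^{j+mk+1}$. Two right-analytic functions agreeing on $(0,T)$ cannot differ in their lowest coefficient, a contradiction; therefore $T=\infty$ and $p=q$.

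I expect the main obstacle to be exactly the requirement $v=p(T)\neq 0$ in the last step. When $v=0$ the leading orders of $\delta_n$ and $\delta_m$ rescale as $n\cdot(\text{order})$ and $m\cdot(\text{order})$, the two sides realign to the same order (precisely as in the base case at $T=0$), and the argument no longer isolates $c_j-d_j$. This is where the hypotheses do their work: conditions (1) and (2) supply $p(t)\neq 0$ for all $t>0$ directly, and condition (3) supplies it through $p(t)q(t)>0$, guaranteeing $v\neq 0$ at every $T>0$. It is the same degeneracy at $v=0$ that the counterexamples to the conjecture exploit.
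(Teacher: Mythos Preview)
Your route through the convolution identity and direct coefficient matching in $t$ is a genuine alternative to the paper's proof. The paper works on the Laplace side: it writes $p,q$ as concatenations $F+G(\cdot-T)\mathbf 1_{[T,\infty)}$, $F+H(\cdot-T)\mathbf 1_{[T,\infty)}$, expands $\widehat{F^k},\widehat{G^k},\widehat{H^k}$ as series in $1/\lambda$ (Lemma~\ref{lemma2.2}), shows the resulting coefficients $d_i$ all vanish (Proposition~\ref{lemma2.3}), and then reads off constraints on $G(0)=p(T)$ and $H(0)=q(T)$ via Lemma~\ref{lemma2.5}. Your reduction to $\delta_n*q^m=\delta_m*q^n$ (by subtracting $q^n*q^m=q^m*q^n$) and comparison of leading orders does the same job in the time domain, and it displays more transparently why the obstruction disappears precisely when $v=0$. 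Your Steps~1--2 also make the initial equality on $[0,\delta)$ self-contained, whereas the paper imports it from \cite{konstantopoulos2021does}.

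There is, however, a genuine gap in Step~3. Right analytic functions on $[0,\infty)$ are right-continuous but need not be left-continuous (the paper's piecewise counterexamples have jumps), so ``by continuity $p=q$ on $[0,T]$'' is unjustified: you only know $p=q$ on $[0,T)$, and $p(T)\neq q(T)$ (i.e.\ $j=0$) is a live case. Then there is no common value $v$, and your expansion $\delta_n(\sigma)\sim n v^{n-1}(c_j-d_j)\sigma^j$ is unavailable. The fix uses the same mechanism: with $u:=p(T)$, $w:=q(T)$, $u\neq w$, one has $\delta_n(0)=u^n-w^n$; since $q^m,q^n$ have orders $mk,nk$ near $0$, the left side of $\delta_n*q^m=\delta_m*q^n$ carries a nonzero term at order $mk+1$ unless $u^n=w^n$, while the right side only begins at order $\ge nk+1$. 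Hence $u^n=w^n$. In cases~(1) and~(2) ($n$ odd) this forces $u=w$, contradicting $j=0$; in case~(3) ($n$ even) it forces $u=-w$, so $p(T)q(T)=-w^2\le 0$, contradicting the hypothesis. This missing $j=0$ case is precisely the dichotomy $G(0)=H(0)$ versus $G(0)\neq H(0)$ that the paper isolates in Lemma~\ref{lemma2.5}.
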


{\begin{remark}\label{rem:1-2}  If condition (2) holds without requiring $\inf\{t:p(t)q(t)>0\}=0$, then $H_{n,m}(p,\cdot)=H_{n,m}(q,\cdot)$ implies 
$p=q$ or $p=-q$.
\end{remark}
}

Some counterexamples such that $p(0)=0, p\neq q$ and $H_{n,m}(p,\cdot)=H_{n,m}(q,\cdot)$ for some $n,m$:
\begin{enumerate}
\item[(a)] $p(t)=\sin t, q(t)=\sin t \,\textbf{1}_{[0,2\pi]}(t)$, $n, m$ arbitrary.


\item[(b)]  $p(t)=t$, $q(t)=t-2\textbf{1}_{[1,\infty)}(t)$, $n=2, m=1$.  

\end{enumerate}

Similarly as for Theorem \ref{theorem1.1}, the above counterexamples are designed when considering some part in a condition in Theorem \ref{theorem1.2} does not hold.

In condition (1), if $n$ and $m$ are both odd with other parts unchanged, then it becomes condition (2), see Remark \ref{rem:1-2}.  If $p(t_0)=0$ for some $t_0>0$, then a counterexample is from {(a).}

In condition (2), if one of $n,m$ is odd and the other is even, then it becomes condition (1) or (3).  If $p(t_0)=0$ for some $t_0>0$, then a counterexample is from {(a).}

In condition (3), if $n$ is odd, we are in the situation of condition (1) or (2). If $p(t_0)q(t_0)\le 0$ for some $t_0>0$, then a counterexample is from {(b).}


The counterexamples given above and earlier are simple c\`adl\`ag functions that cannot be identified, and thus the conjecture \eqref{eqn:c} is disproved. 

Note that Theorem \ref{theorem1.1} and \ref{theorem1.2} generalise \cite[Theorem 1.1, 1.2]{konstantopoulos2021does}.  The main improvement is that Theorem \ref{theorem1.1} and \ref{theorem1.2} do not require functions to be non-decreasing, as opposed to \cite[Theorem 1.2]{konstantopoulos2021does}.   We give some functions in the remark below which are identifiable using Theorem \ref{theorem1.1} and \ref{theorem1.2} but not by \cite[Theorem 1.2]{konstantopoulos2021does} as they are not non-decreasing.

\begin{remark}\label{remark1.4}
For $k\in\N,$ let 
\begin{equation*}
a_k=\left\{
\begin{array}{l}
1,\quad k\text{ is odd},\\
2,\quad k\text{ is even}.
\end{array} \right.
\end{equation*}
Then
\begin{enumerate}
\item the function $p(t)=\sum_{k=1}^{\infty}a_k\textbf{1}_{[k-1,k)}(t)$ is identifiable  in the class of functions $q(t)$ characterised by (2) of Theorem \ref{theorem1.1} 

\item the function $p(t)=t\textbf{1}_{[0,1)}(t)+\sum_{k=2}^{\infty}a_k\textbf{1}_{[k-1,k)}(t)$ is  identifiable in the class of functions $q(t)$ characterised by any of the three conditions in Theorem \ref{theorem1.2}. 
\end{enumerate}
\end{remark}


The rest of the article is organised as follows. 
Theorem \ref{theorem1.1} and Theorem \ref{theorem1.2} are proved in Section 3. In the appendix, we discuss a simple example to investigate the relationship between $p,q$ if $p\neq q$ and \eqref{eqn:hh} holds. 

\section{Proofs of Theorem \ref{theorem1.1} and \ref{theorem1.2}}
\setcounter{equation}{0}

\subsection{Technical preparations}

\begin{lemma}\label{lemma2.2} Suppose that $f:[0,\infty)\mapsto\mathbb{R}$ is of exponential order, and $N\geq 0$ and
$$f(t)=\sum_{i=0}^Na_it^i+o(t^N) \text{\, as\, }t \downarrow 0.$$
Then we have $$\widehat{f}(\lambda)=\sum_{i=0}^N\frac{a_ii!}{\lambda^{i+1}}+o\left(\frac{1}{\lambda^{N+1}}\right) \text{\, as \,}\lambda\to\infty.$$
\end{lemma}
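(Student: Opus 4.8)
The plan is to split $f$ into its Taylor polynomial at $0$ plus a remainder, transform the polynomial term by term, and show the remainder's transform is negligible. Concretely, write $f(t)=P(t)+r(t)$ with $P(t)=\sum_{i=0}^N a_i t^i$ and $r(t):=f(t)-P(t)=o(t^N)$ as $t\downarrow 0$. Since each monomial satisfies the elementary identity $\widehat{t^i}(\lambda)=i!/\lambda^{i+1}$ for $\lambda>0$, linearity of the Laplace transform gives $\widehat{f}(\lambda)=\sum_{i=0}^N a_i i!/\lambda^{i+1}+\widehat{r}(\lambda)$. Thus the whole statement reduces to proving $\widehat{r}(\lambda)=o(1/\lambda^{N+1})$ as $\lambda\to\infty$. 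Note $r$ is of exponential order, being the difference of $f$ (exponential order by hypothesis) and a polynomial (trivially of exponential order), so there exist $C,c>0$ with $|r(t)|\le Ce^{ct}$ for all $t\ge 0$, and $\widehat{r}$ is well-defined for $\lambda>c$.

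Next I would estimate $\widehat{r}(\lambda)$ by splitting the integral at a cutoff $\delta$ chosen from the $o$-condition. Fix $\varepsilon>0$; since $r(t)=o(t^N)$, there is $\delta>0$ with $|r(t)|\le\varepsilon t^N$ for $0<t<\delta$. Then
\[
\Big|\int_0^{\delta} e^{-\lambda t} r(t)\,dt\Big|\le \varepsilon\int_0^{\delta} e^{-\lambda t}t^N\,dt\le \varepsilon\int_0^{\infty} e^{-\lambda t}t^N\,dt=\frac{\varepsilon\,N!}{\lambda^{N+1}},
\]
while on the tail the exponential bound gives, for $\lambda>c$,
\[
\Big|\int_{\delta}^{\infty} e^{-\lambda t} r(t)\,dt\Big|\le C\int_{\delta}^{\infty} e^{-(\lambda-c)t}\,dt=\frac{C\,e^{-(\lambda-c)\delta}}{\lambda-c}.
\]
The tail term decays exponentially in $\lambda$, hence is $o(1/\lambda^{N+1})$. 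Multiplying through by $\lambda^{N+1}$ and letting $\lambda\to\infty$ yields $\limsup_{\lambda\to\infty}\lambda^{N+1}|\widehat{r}(\lambda)|\le\varepsilon\,N!$, and since $\varepsilon>0$ is arbitrary this limit superior is $0$, giving $\widehat{r}(\lambda)=o(1/\lambda^{N+1})$, as required.

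The argument is essentially routine, and I do not expect any deep obstacle. The one point deserving care is the interplay between the two regimes: the near-zero behaviour $o(t^N)$ must be converted into the correct power $1/\lambda^{N+1}$, which is exactly what the computation $\int_0^\infty e^{-\lambda t}t^N\,dt=N!/\lambda^{N+1}$ delivers, while the contribution away from zero must be shown to be of strictly smaller order. The exponential-order hypothesis is precisely what guarantees the tail is not merely smaller but exponentially smaller, so it never competes with the $1/\lambda^{N+1}$ scale. A minor technical check is that $f$ (and hence $r$) is locally integrable, so that the splitting is legitimate; this is implicit in $f$ being of exponential order with a well-defined Laplace transform.
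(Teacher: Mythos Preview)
Your proof is correct and follows essentially the same route as the paper's: subtract the Taylor polynomial, split the Laplace integral of the remainder at a cutoff $\delta$ chosen from the $o(t^N)$ condition, bound the near-zero piece by $\varepsilon N!/\lambda^{N+1}$ via $\int_0^\infty e^{-\lambda t}t^N\,dt$, and use the exponential-order bound to show the tail is $o(1/\lambda^{N+1})$. The only differences are cosmetic (notation and your explicit $\limsup$ phrasing at the end).
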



\begin{proof}
Set $g(t)=f(t)-\sum\limits_{i=0}^Na_it^i$.  Then $g(t)=o(t^N)$ as $t\downarrow 0$, and
\begin{equation*}
\begin{split}
 \widehat{f}(\lambda)=\sum_{i=0}^N\frac{a_ii!}{\lambda^{i+1}}+\widehat{g}(\lambda).
\end{split}
\end{equation*}
We just need 
to show that $\widehat{g}(\lambda)=o(\frac{1}{\lambda^{N+1}})$ as $\lambda\to\infty$.

Since $g(t)=o(t^N)$ as $t\downarrow 0$,  for any $\varepsilon>0$, there exists a $\delta>0$ such that for any $t\in(0,\delta)$, we have $|g(t)|<\varepsilon t^N$. Then 
\begin{equation}\label{lemma2.2_1}
\begin{split}
 |\widehat{g}(\lambda)|\le \int_0^{\delta}e^{-\lambda t}|g(t)|dt+\int_{\delta}^{\infty} e^{-\lambda t}|g(t)|dt=:I_1+I_2.
\end{split}
\end{equation}
For the first term, 
\begin{equation*}
\begin{split}
I_1\le \varepsilon \int_0^{\delta}e^{-\lambda t}t^N\,dt
\leq \varepsilon \int_0^{\infty}e^{-\lambda t}t^N\,dt=\frac{\varepsilon N!}{\lambda^{N+1}}.
\end{split}
\end{equation*}
To deal with the second term, we note that $g$ is of exponential order as a result of $f$ being of exponential order. Hence there are positive numbers
$C$ and $c$ such that $|g(x)|\le C e^{cx}$ for all $x\geq 0$. It implies that
 for $\lambda>c$,
\begin{equation*}
\begin{split}
 I_2\leq C\int_{\delta}^{\infty} e^{(c-\lambda)t}dt=\frac{Ce^{c \delta}}{ (\lambda-c)e^{\lambda\delta}}= { o\left(\frac{1}{\lambda^{N+1}}\right)},\, \text{\,as\,} \lambda\to\infty.
\end{split}
\end{equation*}
Since $\varepsilon$ can be arbitrarily small, combining the above two displays yields that
$\widehat{g}(\lambda)=o(\frac{1}{\lambda^{N+1}})$ as $\lambda\to \infty$.
\end{proof}

Let $f(x)=x^n-x^m$ for $x\in\R$,  and recall  $x_0=(\frac{m}{n})^{1/(n-m)}\in (0,1)$.  Note that $f'(x)=x^{m-1}(nx^{n-m}-m)$, and
\[\begin{cases}
f'(x)<0, \text{ if } x\in(0,x_0),\\
f'(x)=0, \text{ if } x=x_0,\\
f'(x)>0, \text{ if } x>x_0.
\end{cases}\]
Then we obtain the following lemma with proof omitted.

\begin{lemma}\label{lemma2.1}
 For $f$ and $x_0$ given above, 
 $f(x)$ is strictly decreasing on $[0,x_0]$,  strictly increasing on $[x_0,\infty)$, and moreover 

\begin{enumerate}
  \item[(1)] if $n$ and $m$ are odd, then $f(x)$ is an odd function;

\item[(2)] if $n$ is odd and $m$ is even, then $f(x)$ is strictly increasing on $(-\infty,0]$.

\item[(3)] if $n$ is even and  $m$ is odd, then $f(x)$ is strictly  decreasing on $(-\infty,0]$.
\end{enumerate}

\end{lemma}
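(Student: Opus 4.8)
The plan is to reduce the entire statement to a sign analysis of $f'(x) = nx^{n-1} - mx^{m-1}$, exploiting the parity of $n$ and $m$. The behaviour on $[0,\infty)$ is already recorded in the preamble to the lemma: from the factored form $f'(x) = x^{m-1}(nx^{n-m} - m)$ one reads off $f'<0$ on $(0,x_0)$ and $f'>0$ on $(x_0,\infty)$. Since $f$ is continuous, these strict derivative signs on the open subintervals upgrade to strict monotonicity on the closed intervals, i.e. $f$ is strictly decreasing on $[0,x_0]$ and strictly increasing on $[x_0,\infty)$. Hence only the three claims concerning $x\le 0$ remain, and I would treat them case by case.

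For part (1), with $n$ and $m$ both odd, I would argue directly from the definition rather than the derivative: since $(-x)^n = -x^n$ and $(-x)^m = -x^m$, one gets $f(-x) = -x^n + x^m = -f(x)$, so $f$ is odd. This single identity is cleaner than computing $f'$ and also immediately encodes the symmetry of the graph about the origin.

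For parts (2) and (3) the tool is the sign of $f'(x) = nx^{n-1} - mx^{m-1}$ for $x<0$, which is governed by the parities of the exponents $n-1$ and $m-1$. In part (2), $n$ odd and $m$ even force $n-1$ even and $m-1$ odd, so for $x<0$ we have $x^{n-1}>0$ and $x^{m-1}<0$, whence $nx^{n-1}>0$ and $-mx^{m-1}>0$; thus $f'(x)>0$ on $(-\infty,0)$. In part (3), $n$ even and $m$ odd reverse both parities, giving $x^{n-1}<0$ and $x^{m-1}>0$, hence $f'(x)<0$ on $(-\infty,0)$. In each case the derivative has a constant strict sign on the open half-line, and continuity of $f$ promotes this to strict monotonicity on the closed half-line $(-\infty,0]$.

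There is essentially no genuine obstacle here; the one point requiring a word of care is the endpoint $x=0$, where $f'(0)=0$ whenever the smaller exponent exceeds one, so strict monotonicity cannot be read off from the value of $f'$ at $0$ itself. I would resolve this by invoking the standard fact that a continuous function whose derivative is strictly positive (resp. strictly negative) throughout the interior of an interval is strictly increasing (resp. decreasing) on the whole interval, endpoints included. The same remark justifies inclusion of the endpoint $x_0$ in the monotone intervals on the positive half-line.
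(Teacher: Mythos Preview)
Your argument is correct and matches the paper's approach: the paper itself records the factorisation $f'(x)=x^{m-1}(nx^{n-m}-m)$ and the resulting sign table immediately before the lemma, then omits the proof as straightforward. Your additional care about the endpoints $0$ and $x_0$ is appropriate but not something the paper spells out.
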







\begin{figure}[htbp]
\centering
\subfigure[n=3,m=1]{
\includegraphics[width=4cm]{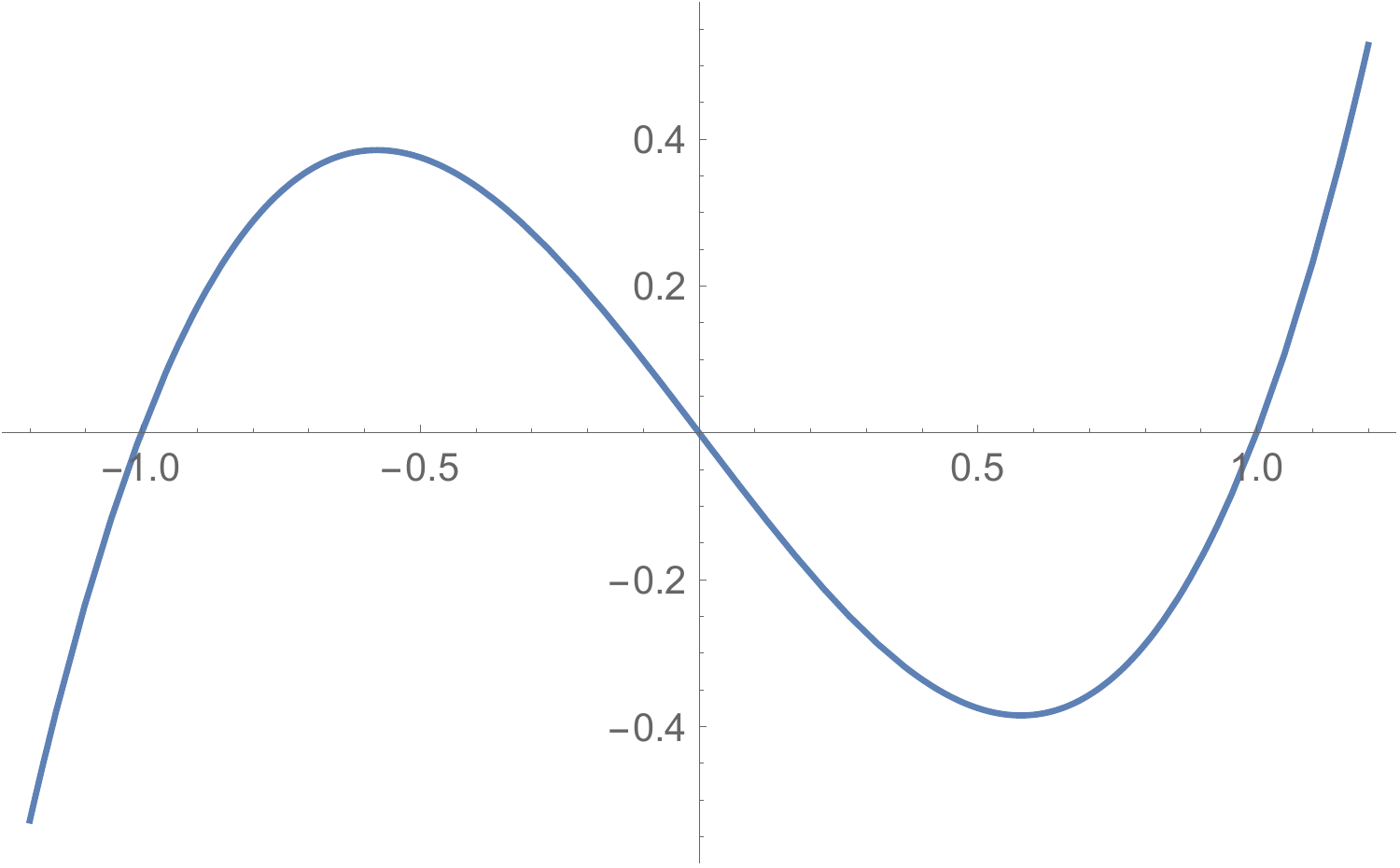}
\label{}
}
\quad
\subfigure[n=3,m=2]{
\includegraphics[width=4cm]{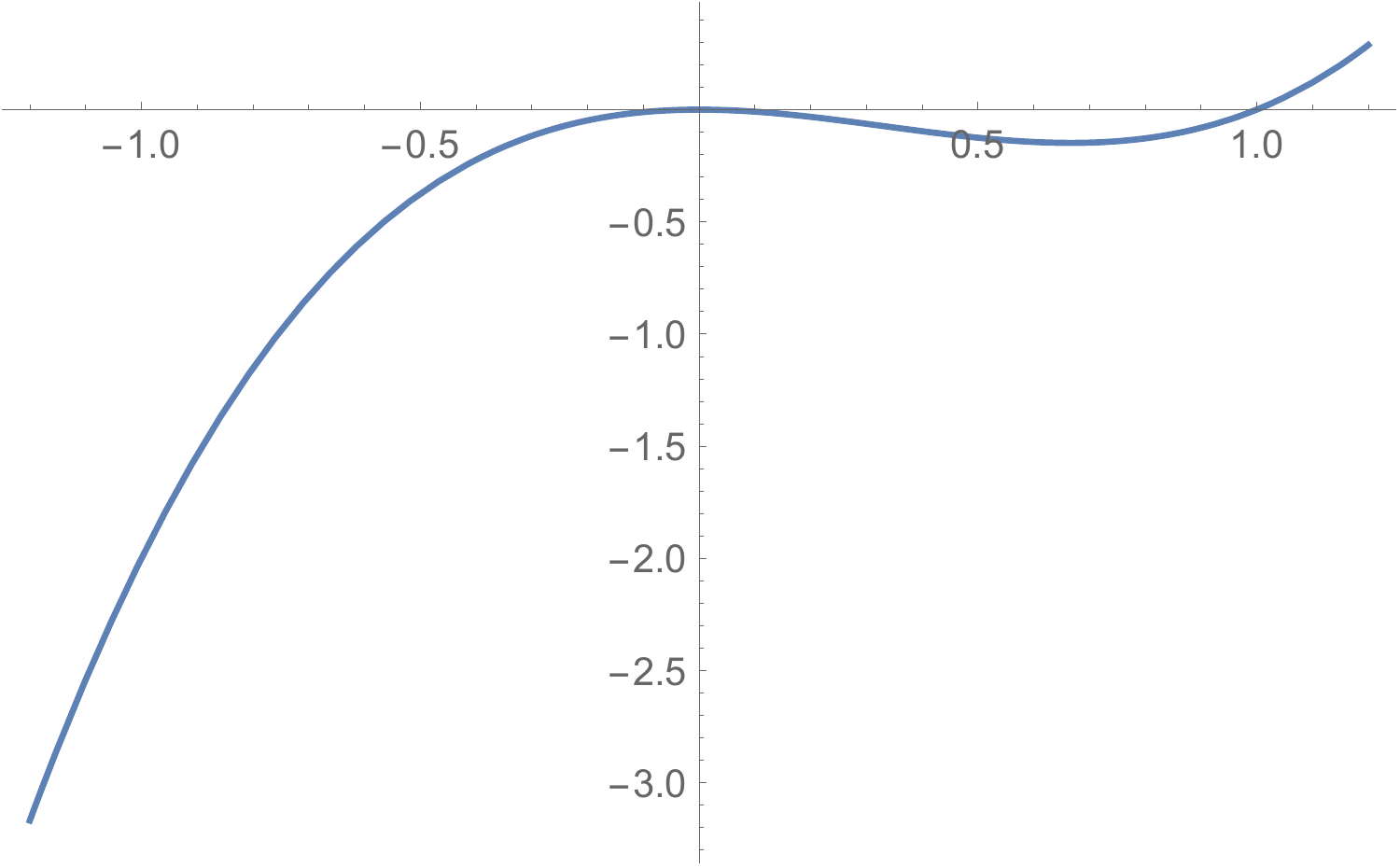}
\label{}
}
\quad
\subfigure[n=4,m=1]{
\includegraphics[width=4cm]{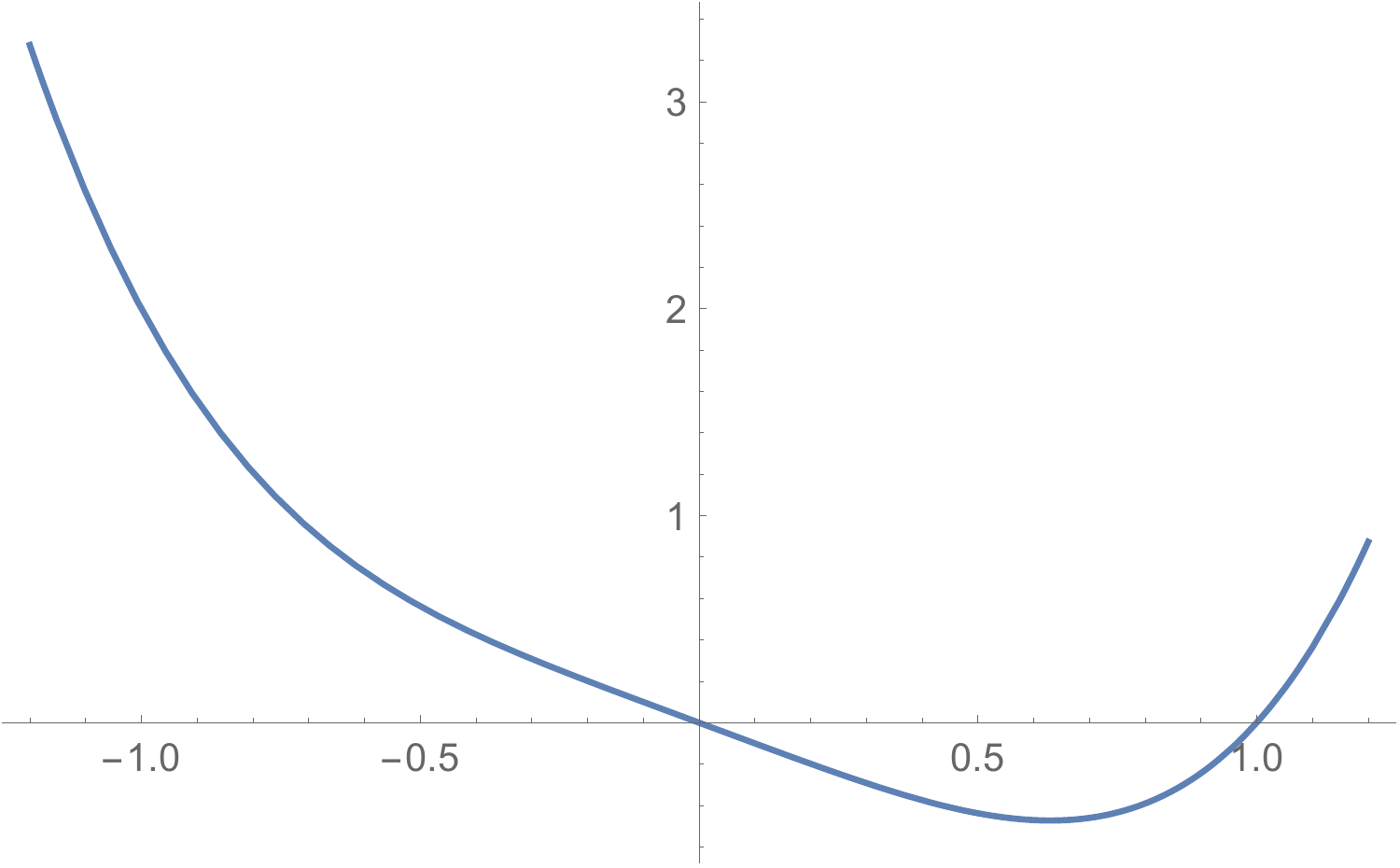}
\label{}
}

\caption{The shape of $f(x)$ for different choices of $n,m$.}\label{Fig3-3}
\end{figure}

\subsection{Concatenated functions}

In this section we introduce the main tool of this paper: concatenated functions. Let $T>0$ and assume  that  $F,G,H:[0,\infty)\mapsto\mathbb{R}$ are of exponential order 
and $G\not=H$, 
and $F(t)=0$ for all $t\geq T$. Consider the concatenated functions $p,q$ of the forms
\begin{equation}\label{new1.1}
\begin{split}
 p(t)&=F(t)+G(t-T)\textbf{1}_{[T,\infty)}(t),\\
 q(t)&=F(t)+H(t-T)\textbf{1}_{[T,\infty)}(t),
\end{split}
\end{equation}
We investigate what would $H_{n,m}(p,\cdot)=H_{n,m}(q,\cdot)$ imply for the relationship between $G$ and $H$. For convenience, set


\begin{equation}\label{newnew2.3}
 \xi(\lambda):=\widehat{F^m}(\lambda)\left(\widehat{G^n}(\lambda)-\widehat{H^n}(\lambda)\right)
 -\widehat{F^n}(\lambda)\left(\widehat{G^m}(\lambda)-\widehat{H^m}(\lambda)\right).
 \end{equation}

 \begin{lemma}\label{newlemma2.1}
Let $p,q,F,G,H$ be given in \eqref{new1.1}. The following statements are equivalent (each with $\lambda$ large enough)  \begin{align}
&H_{n,m}(p,\lambda)=H_{n,m}(q,\lambda),\\
& \frac{\widehat{G^n}(\lambda)-\widehat{H^n}(\lambda)}{\widehat{G^m}(\lambda)-\widehat{H^m}(\lambda)}=H_{n,m}(p,\lambda), \label{newnew2.2}\\
&\xi(\lambda)=e^{-\lambda T}(\widehat{H^n}(\lambda)\widehat{G^m}(\lambda)-\widehat{G^n}(\lambda)\widehat{H^m}(\lambda)). \label{newnew2.4}
\end{align}
If any of the three statements holds, we have
$\xi(\lambda)=e^{-\lambda T}o(1)$ as $\lambda\to \infty$.
\end{lemma}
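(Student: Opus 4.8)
The plan is to establish the three equivalences first and then extract the asymptotic consequence. The key observation is that $p,q$ in \eqref{new1.1} share the same ``front part'' $F$ on $[0,T)$, so $p^k$ and $q^k$ differ only on $[T,\infty)$. First I would compute $\widehat{p^k}(\lambda)$ for $k\in\{n,m\}$. Since $p(t)=F(t)$ on $[0,T)$ and $p(t)=G(t-T)$ on $[T,\infty)$ (the supports of $F$ and the shifted $G$ being disjoint, as $F$ vanishes for $t\ge T$), we get $p^k(t)=F^k(t)+G^k(t-T)\textbf{1}_{[T,\infty)}(t)$, and by the shift rule for Laplace transforms,
\begin{equation*}
\widehat{p^k}(\lambda)=\widehat{F^k}(\lambda)+e^{-\lambda T}\widehat{G^k}(\lambda),\qquad
\widehat{q^k}(\lambda)=\widehat{F^k}(\lambda)+e^{-\lambda T}\widehat{H^k}(\lambda).
\end{equation*}

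Next I would use these to translate the condition $H_{n,m}(p,\lambda)=H_{n,m}(q,\lambda)$, i.e.\ $\widehat{p^n}\,\widehat{q^m}=\widehat{p^m}\,\widehat{q^n}$, into an identity in $\widehat{F^k},\widehat{G^k},\widehat{H^k}$. Substituting the displayed formulas and expanding the cross-multiplied products, the pure-$F$ terms $\widehat{F^n}\widehat{F^m}$ cancel. Collecting the remaining terms by powers of $e^{-\lambda T}$, the coefficient of $e^{-\lambda T}$ is precisely $-\xi(\lambda)$ as defined in \eqref{newnew2.3} (up to sign), and the coefficient of $e^{-2\lambda T}$ is $\widehat{G^n}\widehat{H^m}-\widehat{G^m}\widehat{H^n}$. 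Rearranging yields exactly \eqref{newnew2.4}; dividing instead by $\bigl(\widehat{G^m}(\lambda)-\widehat{H^m}(\lambda)\bigr)$ (legitimate for $\lambda$ large since $G\neq H$ forces this difference to be nonzero for large $\lambda$, by injectivity of the Laplace transform) gives the ratio form \eqref{newnew2.2}. All three statements are then seen to be algebraic rearrangements of the same polynomial identity in $e^{-\lambda T}$, so their equivalence is immediate once the bookkeeping is done carefully.

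For the final asymptotic claim, I would apply Lemma \ref{lemma2.2}. Each of $F,G,H$ is of exponential order, hence so are their powers, and by Assumption~1 together with right analyticity the Laplace transforms $\widehat{G^k}(\lambda),\widehat{H^k}(\lambda)$ admit asymptotic expansions in powers of $1/\lambda$; in particular each is $O(1/\lambda)$ and tends to $0$ as $\lambda\to\infty$. Therefore the right-hand side of \eqref{newnew2.4}, namely $e^{-\lambda T}\bigl(\widehat{H^n}\widehat{G^m}-\widehat{G^n}\widehat{H^m}\bigr)$, is a product of $e^{-\lambda T}$ with a quantity that is $O(1/\lambda^2)=o(1)$. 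Since $\xi(\lambda)$ equals this right-hand side whenever any of the three equivalent statements holds, we conclude $\xi(\lambda)=e^{-\lambda T}o(1)$ as $\lambda\to\infty$.

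The routine part is the algebraic expansion, which is mechanical but must be tracked carefully to match signs and the exact grouping into the definition of $\xi$. The step most deserving of care is justifying the division in passing to the ratio form \eqref{newnew2.2}: one must argue that $\widehat{G^m}(\lambda)-\widehat{H^m}(\lambda)$ is nonzero for all sufficiently large $\lambda$, which follows because $G\neq H$ implies $G^m\neq H^m$ as functions (under the sign/positivity hypotheses in force) and hence the difference of their Laplace transforms is not identically zero; being analytic in $\lambda$, it can vanish only on a discrete set, so it is nonzero for $\lambda$ large. This is the only genuinely non-formal point, and I would flag that the equivalences are asserted ``each with $\lambda$ large enough'' precisely to accommodate it.
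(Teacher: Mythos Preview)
Your proposal is correct and follows essentially the same route as the paper: compute $\widehat{p^k}=\widehat{F^k}+e^{-\lambda T}\widehat{G^k}$ (and likewise for $q$), cross-multiply, and read off the equivalences; the paper's asymptotic step is even simpler than yours, using only that $\widehat{G^k}(\lambda),\widehat{H^k}(\lambda)\to 0$ rather than the full expansion of Lemma~\ref{lemma2.2}. One small correction: at the level of this lemma there are \emph{no} sign or positivity hypotheses in force, so your justification that $G\neq H\Rightarrow G^m\neq H^m$ cannot appeal to them; the paper simply does not address the nonvanishing of $\widehat{G^m}-\widehat{H^m}$, and the cleanest fix is to read \eqref{newnew2.2} in its cross-multiplied form (which is exactly how the paper passes between \eqref{newnew2.2} and \eqref{newnew2.4}).
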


\begin{proof}
For any $k\geq1$, we have
\begin{equation*}
\begin{split}
 p^k(t)&=F^k(t)+G^k(t-T)\textbf{1}_{[T,\infty)}(t),\\
 q^k(t)&=F^k(t)+H^k(t-T)\textbf{1}_{[T,\infty)}(t),
\end{split}
\end{equation*}
which implies
\begin{equation*}
\begin{split}
 \widehat{p^k}(\lambda)&=\widehat{F^k}(\lambda)+e^{-\lambda T}\widehat{G^k}(\lambda),\\
 \widehat{q^k}(\lambda)&=\widehat{F^k}(\lambda)+e^{-\lambda T}\widehat{H^k}(\lambda).
\end{split}
\end{equation*}
Thus  we have
\begin{align}
& H_{n,m}(p,\lambda)=\frac{\widehat{F^n}(\lambda)+e^{-\lambda T}\widehat{G^n}(\lambda)}{\widehat{F^m}(\lambda)+e^{-\lambda T}\widehat{G^m}(\lambda)}\label{eqn:hp},\\
& H_{n,m}(q,\lambda)=\frac{\widehat{F^n}(\lambda)+e^{-\lambda T}\widehat{H^n}(\lambda)}{\widehat{F^m}(\lambda)+e^{-\lambda T}\widehat{H^m}(\lambda)}\label{eqn:hq}.
\end{align}
Then $H_{n,m}(p,\lambda)=H_{n,m}(q,\lambda)$ if and only if (\ref{newnew2.2}) holds. The equivalence between \eqref{newnew2.2} and \eqref{newnew2.4} follows from the comparison of different expressions of $H_{n,m}(p,\lambda)$ in \eqref{newnew2.2} and \eqref{eqn:hp}.
Using \eqref{newnew2.4} and the fact that $\widehat H^k(\lambda)\xrightarrow{\lambda\to \infty}0, \widehat G^k(\lambda)\xrightarrow{\lambda\to\infty} 0$ for any $k\geq 1$, we obtain  $\xi(\lambda)=e^{-\lambda T}o(1)$ as $\lambda\to \infty$.
\end{proof}

For $p,q,F,G,H$ given in \eqref{new1.1}, 
 we assume further that there is $N\ge 0$ such that as $t\downarrow 0$,
\begin{equation}\label{new2.8}
F(t)=\sum_{i=0}^N a_it^i+o(t^N),\,\, G(t)=\sum_{i=0}^N b_it^i+o(t^N),\,\, H(t)=\sum_{i=0}^N c_it^i+o(t^N).
\end{equation}
For $k\ge 1$ and $0\le i\le N$, denote
\begin{equation}\label{newnew2.8}
\begin{split}
 A_{k,i}&=\sum_{j_1+j_2+\cdots+j_k=i} a_{j_1}a_{j_2}\cdots a_{j_k},\\
 B_{k,i}&=\sum_{j_1+j_2+\cdots+j_k=i} b_{j_1}b_{j_2}\cdots b_{j_k},\\
 C_{k,i}&=\sum_{j_1+j_2+\cdots+j_k=i} c_{j_1}c_{j_2}\cdots c_{j_k},\\
d_i&=\sum_{j=0}^i\Big(A_{m,j}(B_{n,i-j}-C_{n,i-j})-A_{n,j}(B_{m,i-j}-C_{m,i-j})\Big)j!(i-j)!.
\end{split}
\end{equation}

\begin{proposition}\label{lemma2.3}
For $p,q$ given above, suppose that $H_{n,m}(p,\cdot)=H_{n,m}(q,\cdot)$. Then
 $d_i=0$ for  $i=0, 1,\cdots, N$.
\end{proposition}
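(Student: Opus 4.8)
The plan is to expand $\xi(\lambda)$ into an asymptotic series in $1/\lambda$ as $\lambda\to\infty$, read off that the coefficient of $\lambda^{-(i+2)}$ is exactly $d_i$, and then invoke the final conclusion of Lemma~\ref{newlemma2.1}, namely $\xi(\lambda)=e^{-\lambda T}o(1)$, to force every $d_i$ to vanish. First I would use the expansions \eqref{new2.8} to obtain the corresponding expansions of the powers. Raising $F(t)=\sum_{i=0}^N a_it^i+o(t^N)$ to the $k$-th power and collecting terms of each order shows $F^k(t)=\sum_{i=0}^N A_{k,i}t^i+o(t^N)$ as $t\downarrow 0$, where $A_{k,i}$ is the $k$-fold convolution of the coefficients $(a_j)$ as defined in \eqref{newnew2.8}; likewise $G^k(t)=\sum_{i=0}^N B_{k,i}t^i+o(t^N)$ and $H^k(t)=\sum_{i=0}^N C_{k,i}t^i+o(t^N)$. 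Applying Lemma~\ref{lemma2.2} to each of these gives
\[
\widehat{F^k}(\lambda)=\sum_{i=0}^N\frac{A_{k,i}\,i!}{\lambda^{i+1}}+o\!\left(\frac{1}{\lambda^{N+1}}\right),
\]
and the analogous expansions for $\widehat{G^k}$ and $\widehat{H^k}$, all as $\lambda\to\infty$.

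Next I would substitute these into the definition \eqref{newnew2.3} of $\xi(\lambda)$. Each of the two products $\widehat{F^m}(\widehat{G^n}-\widehat{H^n})$ and $\widehat{F^n}(\widehat{G^m}-\widehat{H^m})$ is a product of two series whose leading term is $O(\lambda^{-1})$ and whose error is $o(\lambda^{-(N+1)})$; a short check of orders shows the cross terms are $o(\lambda^{-(N+2)})$, so the Cauchy product yields, for $0\le i\le N$, a coefficient of $\lambda^{-(i+2)}$ equal to $\sum_{j=0}^i A_{m,j}(B_{n,i-j}-C_{n,i-j})\,j!(i-j)!$ in the first product and $\sum_{j=0}^i A_{n,j}(B_{m,i-j}-C_{m,i-j})\,j!(i-j)!$ in the second. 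Subtracting reproduces exactly $d_i$, so
\[
\xi(\lambda)=\sum_{i=0}^N\frac{d_i}{\lambda^{i+2}}+o\!\left(\frac{1}{\lambda^{N+2}}\right),\qquad \lambda\to\infty.
\]

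Finally, since $H_{n,m}(p,\cdot)=H_{n,m}(q,\cdot)$, Lemma~\ref{newlemma2.1} gives $\xi(\lambda)=e^{-\lambda T}o(1)$; because $T>0$, the factor $e^{-\lambda T}$ decays faster than any negative power of $\lambda$, whence $\xi(\lambda)=o(\lambda^{-k})$ for every $k$. Comparing this with the expansion above and extracting coefficients one index at a time—multiplying by $\lambda^{i+2}$, letting $\lambda\to\infty$, and using that the lower-order $d_j$ already vanish—forces $d_i=0$ for each $i=0,1,\dots,N$. I expect the only delicate point to be the bookkeeping in the middle step: verifying that the product of the two truncated expansions genuinely reproduces the convolution-type coefficients of $d_i$ without the remainder terms contaminating orders up to $\lambda^{-(N+2)}$. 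This is routine but must be handled with care, keeping in mind that each factor is $O(\lambda^{-1})$ at its leading order.
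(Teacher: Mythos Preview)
Your proposal is correct and follows essentially the same route as the paper's own proof: expand $F^k,G^k,H^k$ to order $N$, apply Lemma~\ref{lemma2.2} to obtain the $\lambda^{-1}$-series for their transforms, multiply out to recognise $d_i$ as the coefficient of $\lambda^{-(i+2)}$ in $\xi(\lambda)$, and then use $\xi(\lambda)=e^{-\lambda T}o(1)$ from Lemma~\ref{newlemma2.1} to kill every $d_i$. The only cosmetic difference is that the paper clears denominators in one stroke by multiplying through by $\lambda^{N+2}$, whereas you peel off the coefficients inductively; both are equivalent and your remark about the cross-term bookkeeping (each factor being $O(\lambda^{-1})$ so that the remainders land in $o(\lambda^{-(N+2)})$) is exactly the point that makes the expansion honest.
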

\begin{proof} Note that for any positive integer $k$, the functions $F^k,G^k,H^k$ are of exponential order.
If \eqref{new2.8} holds, then
$$F^k(t)=\sum_{i=0}^NA_{k,i}t^i+o(t^N), \,\,G^k(t)=\sum_{i=0}^NB_{k,i}t^i+o(t^N),\,\, H^k(t)=\sum_{i=0}^NC_{k,i}t^i+o(t^N).$$
Applying Lemma \ref{lemma2.2}, we deduce that  as $\lambda\to \infty$,
\begin{equation*}
\begin{split}
  \widehat{F^k}(\lambda)=\frac 1{\lambda}\left[\sum_{i=0}^NA_{k,i}\frac{i!}{\lambda^{i}}+o(\lambda^{-N})\right],\quad
   \widehat{G^k}(\lambda)=\frac 1{\lambda}\left[\sum_{i=0}^NB_{k,i}\frac{i!}{\lambda^{i}}+o(\lambda^{-N})\right],
   \end{split}
\end{equation*}
and
 $$\widehat{H^k}(\lambda)=\frac 1{\lambda}\left[\sum_{i=0}^NC_{k,i}\frac{i!}{\lambda^{i}}+o(\lambda^{-N})\right].$$
Plugging these   expressions 
into formula \eqref{newnew2.3} 
yields
\begin{equation*}
\begin{split}
 \xi(\lambda)
 =\frac {1}{\lambda^2}\left[\sum_{i=0}^{N}\frac{d_i}{\lambda^{i}}+o(\lambda^{-N})\right],\quad\text{as }\lambda\to \infty.
\end{split}
\end{equation*}
On the other hand, by Lemma \ref{newlemma2.1},
 $\xi(\lambda)=e^{-\lambda T}o(1)$ as $\lambda\to \infty$.
Together with the above display we obtain 
  $$\sum_{i=0}^{N}d_i\lambda^{N-i}=\lambda^{N+2} e^{-\lambda T}o(1)+o(1)=o(1),\quad \lambda\to\infty.$$
  Consequently, $d_i=0$ for $i=0, 1,\cdots, N$.
\end{proof}

Recall $x_0=(\frac{m}{n})^{1/(n-m)}$,
$T>0$, $F,G,H:[0,\infty)\to\mathbb{R}$ are of exponential order, and $F(t)=0$ for all $t\geq T$.
Assume further that  $F, G, H$ are right analytic at $0$. That is,  there is $\delta>0$ such that  for $t\in [0,\delta)$,
 \begin{equation}\label{eqn:fgh}F(t)=\sum_{i=0}^\infty a_it^i,\,\, G(t)=\sum_{i=0}^\infty b_it^i,\,\, H(t)=\sum_{i=0}^\infty c_it^i.\end{equation}
Assume also that $\inf\{t\ge 0: G(t)\not=H(t)\}=0$. Together with Assumption 1 for {$p$}, we have $l,w\ge 0$ such that
\begin{equation}\label{eqn:lw}a_w\not=0, a_i=0, \forall i<w; \quad b_l\not=c_l,  b_i=c_i, \forall i<l.\end{equation}
For $k\ge 1$ and $i\ge 0$, recall $A_{k,i}, B_{k,i},C_{k,i}$ and $d_i$ defined in (\ref{newnew2.8}). The next lemma studies the relationship between $G(0)$ and $H(0)$ when $F(0)=1.$


\begin{lemma}\label{lemma2.4}
Under the assumptions given above, if $H_{n,m}(p,\cdot)=H_{n,m}(q,\cdot)$ and  $F(0)=1$, then we have the following properties.
\begin{itemize}
  \item [(1)]If $G(0)=H(0)=u$, then $u=0$ or  $u^{n-m}=\frac{m}{n}$.
  \item [(2)] If $G(0)=u,H(0)=v$ and $u> v$, then \begin{equation}\label{eqn:uv}u^n-u^m=v^n-v^m,\end{equation} and moreover
\begin{enumerate}
\item[(i)]  $v<x_0$;

\item[(ii)] if $u\ge 1$, then  $v\le 0$;

\item[(iii)] if $n$ is odd and $m$ is even, then $u\leq1$.
\end{enumerate}

\end{itemize}

\end{lemma}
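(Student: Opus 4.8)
The plan is to extract everything from the vanishing of the coefficients $d_i$ furnished by Proposition \ref{lemma2.3}: since $F,G,H$ are right analytic, the expansions \eqref{eqn:fgh} hold to all orders, so the hypothesis $H_{n,m}(p,\cdot)=H_{n,m}(q,\cdot)$ forces $d_i=0$ for every $i\ge 0$. The assumption $F(0)=1$ means $a_0=1$, whence $A_{k,0}=a_0^k=1$ for all $k$, while $B_{k,0}=G(0)^k$ and $C_{k,0}=H(0)^k$. I would first read off the lowest coefficient: by \eqref{newnew2.8}, $d_0=A_{m,0}(B_{n,0}-C_{n,0})-A_{n,0}(B_{m,0}-C_{m,0})=(u^n-v^n)-(u^m-v^m)$, so $d_0=0$ is exactly $u^n-u^m=v^n-v^m$, i.e. $f(u)=f(v)$ for $f(x)=x^n-x^m$. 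This already gives \eqref{eqn:uv} of part (2). In part (1), where $u=v$, the relation $d_0=0$ holds trivially and carries no information, so there I must descend to a higher-order coefficient.

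For part (1) I would use the first coefficient that detects the difference between $G$ and $H$. By \eqref{eqn:lw} (with $b_0=c_0=u$, hence $l\ge 1$) one has $b_i=c_i$ for $i<l$, and I would first check that this propagates to the power coefficients: $B_{k,s}=C_{k,s}$ for every $s<l$ and $k\ge 1$, since each monomial $b_{j_1}\cdots b_{j_k}$ with $j_1+\cdots+j_k=s<l$ has all indices $<l$. Consequently, in the sum defining $d_l$ only the term $j=0$ survives. The one new ingredient is the identity $B_{k,l}-C_{k,l}=k\,u^{k-1}(b_l-c_l)$, which holds because the only compositions of $l$ into $k$ nonnegative parts that do not cancel are the $k$ compositions having a single part equal to $l$ and the rest $0$. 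Substituting gives
\[ d_l=l!\,(b_l-c_l)\big(n u^{n-1}-m u^{m-1}\big)=l!\,(b_l-c_l)\,u^{m-1}\big(nu^{n-m}-m\big). \]
Since $l!\neq 0$ and $b_l\neq c_l$, the equation $d_l=0$ forces $u^{m-1}(nu^{n-m}-m)=0$, hence $u=0$ or $u^{n-m}=m/n$, which is part (1).

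For the three refinements in part (2) I would argue purely from $f(u)=f(v)$ with $u>v$, using the shape of $f$ recorded in Lemma \ref{lemma2.1} ($f$ strictly decreasing on $[0,x_0]$, strictly increasing on $[x_0,\infty)$, with $f(0)=f(1)=0$ and $f(x_0)<0$, and $x_0<1$). For (i), if $v\ge x_0$ then $u>v\ge x_0$, and strict monotonicity on $[x_0,\infty)$ gives $f(u)>f(v)$, contradicting $f(u)=f(v)$; hence $v<x_0$. For (ii), if $u\ge 1>x_0$ then monotonicity on $[x_0,\infty)$ gives $f(u)\ge f(1)=0$; since $f<0$ on $(0,x_0]$ while $f(v)=f(u)\ge 0$ with $v<x_0$, the point $v$ cannot lie in $(0,x_0)$, so $v\le 0$. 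For (iii), the parity enters through Lemma \ref{lemma2.1}(2): with $n$ odd and $m$ even, $f$ is increasing on $(-\infty,0]$, so $f\le f(0)=0$ on all of $(-\infty,x_0]$; thus $f(u)=f(v)\le 0=f(1)$, and monotonicity on $[x_0,\infty)$ then forces $u\le 1$ (the case $u<x_0<1$ being immediate).

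The one genuinely computational step is the reduction of $d_l$ in part (1): verifying that all $j\ge 1$ terms vanish and establishing $B_{k,l}-C_{k,l}=k\,u^{k-1}(b_l-c_l)$. Everything else is bookkeeping with $d_0$ together with the elementary calculus of $f$ from Lemma \ref{lemma2.1}. I expect the main care to be needed in this combinatorial reduction, where the role of $l$ as the \emph{first} index of disagreement between $G$ and $H$ is exactly what makes the cross terms cancel and isolates the factor $b_l-c_l\neq 0$.
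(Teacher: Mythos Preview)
Your proposal is correct and follows essentially the same route as the paper: invoke Proposition~\ref{lemma2.3} to obtain $d_i=0$ for all $i$, use $d_0=0$ (with $A_{k,0}=1$) for part~(2) to get $f(u)=f(v)$, use $d_l=0$ and the combinatorial identity $B_{k,l}-C_{k,l}=ku^{k-1}(b_l-c_l)$ for part~(1), and then read off (i)--(iii) from the monotonicity properties of $f(x)=x^n-x^m$ in Lemma~\ref{lemma2.1}. The only cosmetic difference is in (iii): the paper argues by contradiction via (ii) (assuming $u>1$ forces $v\le 0$, whence $f(u)>0\ge f(v)$), whereas you go through (i) directly to get $f(v)\le 0$ on $(-\infty,x_0]$; both are equally short and valid.
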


\begin{proof}

By Proposition \ref{lemma2.3}, we have $d_i=0$ for any integer $i\geq 0$.

(1). If $G(0)=H(0)=u$, then by \eqref{eqn:fgh}, we have $b_0=c_0=u$. 
Recall $l$ in \eqref{eqn:lw}. Then $l\ge 1$, and for any $k\geq 1$, we have $B_{k,i}=C_{k,i}$ for all $i<l$ and $B_{k,l}-C_{k,l}=ku^{k-1}(b_l-c_l)$. The fact $F(0)=1$ yields that $a_0=1$ and $A_{k,0}=1$ for all $k\ge 1$.
Hence
\begin{equation*}
\begin{split}
 0=d_l&=\sum_{j=0}^l\Big(A_{m,j}(B_{n,l-j}-C_{n,l-j})-A_{n,j}(B_{m,l-j}-C_{m,l-j})\Big)j!(l-j)!\\
 &=\Big(A_{m,0}(B_{n,l}-C_{n,l})-A_{n,0}(B_{m,l}-C_{m,l})\Big)l!\\
 &=\Big(nu^{n-1}(b_l-c_l)-mu^{m-1}(b_l-c_l)\Big)l!.
\end{split}
\end{equation*}
Thus  $u=0$ or $u^{n-m}=\frac{m}{n}$.

(2). If $G(0)=u$, $H(0)=v$ and $u>v$, then $b_0=u, c_0=v$ which entails  $B_{k,0}-C_{k,0}=u^k-v^k$ for all $k\ge 1$.
Thus
\begin{equation*}
\begin{split}
 0=d_0=A_{m,0}(B_{n,0}-C_{n,0})-A_{n,0}(B_{m,0}-C_{m,0})=(u^n-v^n)-(u^m-v^m),
\end{split}
\end{equation*}
which implies $u^n-v^n=u^m-v^m$, or $u^n-u^m=v^n-v^m$.

(i) If $u>v\ge x_0$, then according to Lemma $\ref{lemma2.1}$,
 $u^n-u^m>v^n-v^m$, which contradicts \eqref{eqn:uv}. Thus $v<x_0.$

 (ii) If $u\ge 1$ and $v>0$, then by (i), we have  {$v< x_0$}.
Using Lemma $\ref{lemma2.1}$ again, $u^n-u^m\ge 0>v^n-v^m$,  contradicting \eqref{eqn:uv}. Thus $v\leq 0$.

(iii) If  $n$ is odd, $m$ is even and $u>1$ with $u>v$, then by (ii), we must have $v\leq0$.
Applying Lemma $\ref{lemma2.1}$ yields $u^n-u^m>0\ge v^n-v^m$ which is again a contradiction. Thus $u\le 1$.
\end{proof}

The next lemma studies the same problem when $F(0)=0.$

\begin{lemma}\label{lemma2.5}
Under the same assumptions as in Lemma \ref{lemma2.4}, if $H_{n,m}(p,\cdot)=H_{n,m}(q,\cdot)$ and $F(0)=0$, {then we have the following properties:

(1) If $G(0)=H(0)=u$, then $u=0$.

(2) If $G(0)\neq H(0)$, then $n$ must be even and $G(0)=-H(0)$.

}
\end{lemma}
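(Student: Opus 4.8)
The plan is to follow the blueprint of the proof of Lemma~\ref{lemma2.4}, but to exploit the hypothesis $F(0)=0$, which forces $a_0=0$ and hence, recalling \eqref{eqn:lw}, $w\ge 1$. The engine is again Proposition~\ref{lemma2.3}: since $F,G,H$ are right analytic at $0$, the expansion \eqref{new2.8} holds for every $N$, so $H_{n,m}(p,\cdot)=H_{n,m}(q,\cdot)$ yields $d_i=0$ for all $i\ge 0$. The structural fact I would record first is the vanishing pattern of the $A_{k,i}$: since $a_i=0$ for $i<w$, every index in the defining sum of $A_{k,i}$ must be at least $w$, so $A_{k,i}=0$ whenever $i<kw$, while $A_{k,kw}=a_w^k\neq 0$. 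I would also reuse, exactly as in Lemma~\ref{lemma2.4}, the identities $B_{k,0}-C_{k,0}=u^k-v^k$ and (when $l\ge 1$) $B_{k,l}-C_{k,l}=ku^{k-1}(b_l-c_l)$, together with $B_{k,i}=C_{k,i}$ for $i<l$.

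For part~(1), $G(0)=H(0)=u$ means $b_0=c_0=u$, so $l\ge 1$. I would evaluate $d_i$ at the single index $i=mw+l$. A term of the first type $A_{m,j}(B_{n,i-j}-C_{n,i-j})$ survives only if $j\ge mw$ (so $A_{m,j}\neq 0$) and $i-j\ge l$ (so the difference is nonzero), which forces $j=mw$, $i-j=l$; a term of the second type $A_{n,j}(B_{m,i-j}-C_{m,i-j})$ would need both $j\ge nw$ and $i-j\ge l$, i.e.\ $j\le mw$, which is impossible since $nw>mw$. Hence
\begin{equation*}
0=d_{mw+l}=a_w^m\,nu^{n-1}(b_l-c_l)\,(mw)!\,l!,
\end{equation*}
and since $a_w\neq0$, $b_l\neq c_l$ and $n\ge 2$, this forces $u^{n-1}=0$, i.e.\ $u=0$.

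For part~(2), $G(0)\neq H(0)$ means $b_0\neq c_0$, so $l=0$; write $u=G(0)$, $v=H(0)$ with $u\neq v$. Now I would evaluate $d_i$ at $i=mw$. A first-type term needs $j\ge mw$ and $j\le i=mw$, forcing $j=mw$, $i-j=0$; a second-type term needs $j\ge nw>mw=i$, impossible. Thus $0=d_{mw}=a_w^m(u^n-v^n)(mw)!$, whence $u^n=v^n$. The conclusion is then a parity argument on $n$: if $n$ were odd, $x\mapsto x^n$ would be injective on $\R$ and $u^n=v^n$ would give $u=v$, contradicting $u\neq v$; therefore $n$ is even, and then $u^n=v^n$ forces $|u|=|v|$, so $u=-v$, i.e.\ $G(0)=-H(0)$.

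The only genuine work is the combinatorial bookkeeping in the two displays: one must check that at the chosen indices $mw+l$ and $mw$ exactly one term of the double sum defining $d_i$ survives, all others being killed either by the factor $A_{k,j}$ (for $j$ too small relative to $kw$) or by $B_{k,s}-C_{k,s}$ (for $s<l$). This isolation of a single surviving term is the main point to get right; once it is done, parts~(1) and~(2) reduce respectively to the scalar equations $u^{n-1}=0$ and $u^n=v^n$, from which the stated conclusions follow immediately using $n\ge 2$ and the injectivity/parity of $x\mapsto x^n$.
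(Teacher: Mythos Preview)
Your proof is correct and follows essentially the same approach as the paper: both arguments use Proposition~\ref{lemma2.3} to get $d_i=0$ for all $i$, exploit the vanishing $A_{k,i}=0$ for $i<kw$ together with $B_{k,i}=C_{k,i}$ for $i<l$, and then evaluate $d_{mw+l}$ (for part~(1)) and $d_{mw}$ (for part~(2)) to isolate a single surviving term, yielding respectively $a_w^m n u^{n-1}(b_l-c_l)(mw)!\,l!=0$ and $a_w^m(u^n-v^n)(mw)!=0$. The paper organizes the bookkeeping by splitting the sum into three ranges $I_1,I_2,I_3$, whereas you phrase it as a direct survival analysis, but the content is identical.
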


\begin{proof}
Similarly, Proposition \ref{lemma2.3} entails that $d_i=0$ for any integer $i\geq 0$.
Recall $l,w$ from \eqref{eqn:lw}. The fact $F(0)=0$ leads to $w\ge 1$.
Also we have $A_{k,i}=0$ for $i<kw$ and $A_{k,kw}=a^k_w\not=0$, for any $k\geq 1$.

Note that $b_i=c_i$ for $i<l$ and $b_l\neq c_l$.
If {$G(0)=H(0)=u$}, then $l\ge 1$,  and for any $k\geq 1$, we have $B_{k,i}=C_{k,i}$ for $i<l$ and { $B_{k,l}-C_{k,l}=ku^{k-1}(b_l-c_l)$}.  Thus
\begin{equation}\label{eqn:i-i}
\begin{split}
 0=d_{mw+l}=:I_1+I_2+I_3,
\end{split}
\end{equation}
where
\begin{equation*}
\begin{split}
 I_1&=\sum_{j=0}^{mw-1}\Big(A_{m,j}(B_{n,mw+l-j}-C_{n,mw+l-j})-A_{n,j}(B_{m,mw+l-j}-C_{m,mw+l-j})\Big)j!(mw+l-j)!,\\
 I_2&=\Big(A_{m,mw}(B_{n,l}-C_{n,l})-A_{n,mw}(B_{m,l}-C_{m,l})\Big)(mw)!l!
\end{split}
\end{equation*}
and
\begin{equation*}
\begin{split}
 I_3&=\sum_{j=mw+1}^{mw+l}\Big(A_{m,j}(B_{n,mw+l-j}-C_{n,mw+l-j})-A_{n,j}(B_{m,mw+l-j}-C_{m,mw+l-j})\Big)j!(mw+l-j)!.
\end{split}
\end{equation*}
To analysis $I_1,I_2,I_3$, we will need the following two facts:
\begin{equation}\label{eqn:aki}
A_{k,i}=0,\quad  \forall k\geq 1, i< kw,
\end{equation}
and
\begin{equation}\label{eqn:bc}
B_{k,i}=C_{k,i},\quad  \forall k\geq 1, i<l.
\end{equation}
Using (\ref{eqn:aki}), we obtain $I_1=0$. Using (\ref{eqn:bc}), we obtain $I_3=0$. Using the fact $mw<nw$ and (\ref{eqn:aki}), we have
\begin{equation*}
\begin{split}
 I_2=A_{m,mw}(B_{n,l}-C_{n,l})(mw)!l!=a_w^mnu^{n-1}(b_l-c_l)(mw)!l!.
\end{split}
\end{equation*}
The above three displays and (\ref{eqn:i-i}) entail
\begin{equation*}
\begin{split}
 a_w^mnu^{n-1}(b_l-c_l)(mw)!l!=0,
\end{split}
\end{equation*}
which implies $u=0$.

Now consider the case {$G(0)\neq H(0)$. Setting $u=G(0)$ and $v=H(0)$}, we have $B_{k,0}-C_{k,0}=u^k-v^k$. Then {by the similar calculations as above}
\begin{equation*}
\begin{split}
0=d_{mw}&=\sum_{j=0}^{mw}\Big(A_{m,j}(B_{n,mw-j}-C_{n,mw-j})-A_{n,j}(B_{m,mw-j}-C_{m,mw-j})\Big)j!(mw-j)!\\
 &=A_{m,mw}\Big(B_{n,0}-C_{n,0}\Big)(mw)!\\
 &=a^m_w(u^n-v^n)(mw)!,
\end{split}
\end{equation*}
and hence $u^n=v^n$. {If $n$ is odd, it follows that $u=v$, which contradicts the assumption $u\neq v$.} Consequently,
 $n$ must be even  and {$v=-u$}. The proof for this lemma is finished.
\end{proof}

\subsection{Proofs of Theorem \ref{theorem1.1} and \ref{theorem1.2}}
\begin{proof}[Proof of Theorem \ref{theorem1.1}]
 By Lemma 2.1 and Lemma 2.2 of \cite{konstantopoulos2021does}, there exists $\epsilon>0$ such that $p(t)=q(t)$ for $t\in[0,\epsilon)$. Next, let $T=\inf\{t\geq0:p(t)\neq q(t)\}$. It suffices to show that $T=\infty$.  If $T<\infty$, we let
\begin{equation*}
\begin{split}
 F(t)=p(t)\textbf{1}_{[0,T)}(t),\quad G(t)=p(t+T),\quad H(t)=q(t+T), \quad t\geq 0.
\end{split}
\end{equation*}
Then $F,G,H$ are all right analytic at $0$,  $F(t)=0$ for $t\geq T$ and $\inf\{t\ge 0: G(t)\not=H(t)\}=0$. Moreover, we have $p(t)=F(t)+G(t-T)\textbf{1}_{[T,\infty)}(t)$, $q(t)=F(t)+H(t-T)\textbf{1}_{[T,\infty)}(t)$, and
 $F(0)=1$, $G(0)=p(T)$, $H(0)=q(T)$.

 If $p(T)=q(T)$, then (1) of Lemma \ref{lemma2.4} contradicts any of the three conditions in {Theorem } \ref{theorem1.1}.  If $p(T)\neq q(T)$ (without loss of generality, we assume $p(T)> q(T)$), then (2)-(i) (resp.\ (2)-(ii), (2)-(iii)) of Lemma \ref{lemma2.4} contradicts (1) (resp.\ (2), (3)) of Theorem \ref{theorem1.1}. These contradictions entail that $T=\infty.$ Then the proof is finished.


\end{proof}

\begin{proof}[Proof of Theorem \ref{theorem1.2}]
The proof is almost identical to that of Theorem \ref{theorem1.1}. The only main difference is that we shall use Lemma \ref{lemma2.5} rather than Lemma \ref{lemma2.4}. Details are omitted.
\end{proof}




\section{Acknowledgement}
M.Z is supported by
the NSFC grant with number 12271351. The authors thank Yao Luo for reading a draft version and helpful comments.

\section{Appendix: a special case}
We would like to get an idea of the relationship between $p,q$ if $p\neq q$ and $H_{n,m}(p,\cdot)=H_{n,m}(q,\cdot).$ For this purpose we study a special and simple case.

Let $\mathcal D$ be the set of functions $f:[0,\infty)\mapsto\R$ such that
$f$ can be written as
$ f(t)=\sum_{i=0}^\infty a_it^i$,  and   $|a_k|\le \frac{K\alpha^k}{k!}$
for some $\alpha>0$ and  $K>0$ and any integer $k\geq 0$.
For any $f$ in $\mathcal D$, it is of exponential order and for $\lambda>\alpha$, we have (See \cite{Schiff1999})
\begin{equation*}
\begin{split}
 \widehat{f}(\lambda)=\sum_{k=0}^{\infty}a_{k}\widehat{t^k}=\sum_{k=0}^{\infty}\frac{a_kk!}{\lambda^{k+1}}.
\end{split}
\end{equation*}
If $f,g\in \mathcal D$, then the functions
 $f(t)g(t)$, $f'$ and $f_T(t)$ are in $\mathcal D$, where $f_T(t):=f(t+T), t\geq 0$.

For fixed $T>0$, consider concatenated functions $p,q$  in (\ref{new1.1}).
We assume further that
$$F(t)=f(t)\textbf{1}_{[0,T)}(t), \,\,f(t)=\sum_{i=0}^\infty a_it^i,\,\, G(t)=\sum_{i=0}^\infty b_it^i,\,\, H(t)=\sum_{i=0}^\infty c_it^i,$$
with $f,G,H\in \mathcal D$, $f\not=0$ and $G\not=H$. 
Then equation (\ref{newnew2.4}) becomes
\begin{eqnarray*}
\begin{split}
&\,\widehat{f^{m}}(\lambda)
\left(
\widehat{G^n}(\lambda)-\widehat{H^n}(\lambda)\right)
-\widehat{f^{n}}(\lambda)\left(\widehat{G^m}(\lambda)-\widehat{H^m}(\lambda)\right) \\
=&e^{-\lambda T} \Big[\widehat{H^n}(\lambda)\widehat{G^m}(\lambda)-\widehat{G^n}(\lambda)\widehat{H^m}(\lambda)+\widehat{f^{m}_T}(\lambda)
\left(
\widehat{G^n}(\lambda)-\widehat{H^n}(\lambda)\right)
-\widehat{f^{n}_T}(\lambda)\left(\widehat{G^m}(\lambda)-\widehat{H^m}(\lambda)\right)\Big].
\end{split}
\end{eqnarray*}
This, together with Lemma  \ref{newlemma2.1}, we obtain the following Proposition with proof omitted.

\begin{proposition}\label{prop4.2}
 (1)If $ H_{n,m}(p,\cdot)=H_{n,m}(q,\cdot)$, then $ H_{n,m}(p,\cdot)=H_{n,m}(f,\cdot)$.

 (2)  $H_{n,m}(p,\cdot)=H_{n,m}(f,\cdot)$ if and only if $p=f$ or
 $$ \frac{\widehat{f_T^n}(\cdot)-\widehat{G^n}(\cdot)}{\widehat{f_T^m}(\cdot)-\widehat{G^m}(\cdot)}
 =H_{n,m}(f,\cdot).$$
\end{proposition}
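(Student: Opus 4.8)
The plan is to treat both $p$ and $f$ as concatenations of the form \eqref{new1.1} that share the \emph{same} head $F=f\mathbf{1}_{[0,T)}$, and to exploit the crucial asymmetry that, since $f\in\mathcal D$, each $\widehat{f^k}(\lambda)$ is a genuine convergent power series in $1/\lambda$ with no exponentially small part, whereas the truncated head contributes an $e^{-\lambda T}$ correction. The linchpin is the identity obtained by splitting the defining integral at $T$,
$$\widehat{F^k}(\lambda)=\widehat{f^k}(\lambda)-e^{-\lambda T}\widehat{f_T^k}(\lambda),$$
which is precisely what turns statement \eqref{newnew2.4} of Lemma \ref{newlemma2.1} into the displayed equation just before the proposition.

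For part (1), I would start from that displayed equation, which by Lemma \ref{newlemma2.1} holds exactly because $H_{n,m}(p,\cdot)=H_{n,m}(q,\cdot)$. Its left-hand side $\widehat{f^m}(\widehat{G^n}-\widehat{H^n})-\widehat{f^n}(\widehat{G^m}-\widehat{H^m})$ is, term by term, a convergent power series in $1/\lambda$ for large $\lambda$ (as $f,G,H\in\mathcal D$), while its right-hand side is $e^{-\lambda T}$ times such a series, hence smaller than any power of $1/\lambda$. The key step is the elementary polynomial-versus-exponential principle: a function analytic in $w=1/\lambda$ near $w=0$ that is $O(e^{-T/w})$ as $w\to0^+$ must have all its Taylor coefficients vanish, so the left-hand side is identically zero. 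This yields $\widehat{f^m}(\widehat{G^n}-\widehat{H^n})=\widehat{f^n}(\widehat{G^m}-\widehat{H^m})$, i.e. $(\widehat{G^n}-\widehat{H^n})/(\widehat{G^m}-\widehat{H^m})=H_{n,m}(f,\cdot)$. On the other hand, \eqref{newnew2.2} gives $(\widehat{G^n}-\widehat{H^n})/(\widehat{G^m}-\widehat{H^m})=H_{n,m}(p,\cdot)$, and comparing the two identifies $H_{n,m}(p,\cdot)=H_{n,m}(f,\cdot)$.

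For part (2), the observation is that $f$ itself has the concatenated form \eqref{new1.1}, namely $f(t)=F(t)+f_T(t-T)\mathbf{1}_{[T,\infty)}(t)$, with the same head $F$ as $p$ but tail $f_T$ in place of $G$. If $G=f_T$ then $p=f$ and both sides hold trivially; otherwise $G\neq f_T$ and I would apply Lemma \ref{newlemma2.1} verbatim to the pair $(p,f)$. The equivalence of its statements (1) and (2) then reads $H_{n,m}(p,\cdot)=H_{n,m}(f,\cdot)$ iff $(\widehat{G^n}-\widehat{f_T^n})/(\widehat{G^m}-\widehat{f_T^m})=H_{n,m}(p,\cdot)$; using $H_{n,m}(p,\cdot)=H_{n,m}(f,\cdot)$ on the right and cancelling the overall sign in numerator and denominator gives exactly $(\widehat{f_T^n}-\widehat{G^n})/(\widehat{f_T^m}-\widehat{G^m})=H_{n,m}(f,\cdot)$. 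Assembling the two cases produces the stated ``$p=f$ or $\dots$'' dichotomy.

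The only delicate points are bookkeeping ones. In part (1) one must rule out a vanishing denominator $\widehat{G^m}-\widehat{H^m}\equiv0$: this would force $\widehat{p^m}=\widehat{q^m}$, and combined with the hypothesis $H_{n,m}(p,\cdot)=H_{n,m}(q,\cdot)$ it would also give $\widehat{G^n}=\widehat{H^n}$, whence $G^m=H^m$ and $G^n=H^n$, and by coprimality (Bezout) $G=H$, contradicting $G\neq H$. In part (2), a meaningful ratio in the right-hand alternative already forces $\widehat{f_T^m}\neq\widehat{G^m}$, i.e. $f_T\neq G$ and hence $p\neq f$, so the two alternatives are genuinely exhaustive and Lemma \ref{newlemma2.1} applies in the second. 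I expect the polynomial-versus-exponential step to be the main conceptual obstacle; once it is in place, the remainder is routine manipulation of the concatenation identities together with the equivalences in Lemma \ref{newlemma2.1}.
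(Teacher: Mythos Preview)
Your proposal is correct and follows essentially the same route as the paper: you use the splitting identity $\widehat{F^k}=\widehat{f^k}-e^{-\lambda T}\widehat{f_T^k}$ to rewrite \eqref{newnew2.4} as the displayed equation preceding the proposition, invoke the power-series-versus-exponential principle (available because $f,G,H\in\mathcal D$) to force both sides to vanish, and for part (2) apply Lemma \ref{newlemma2.1} to the pair $(p,f)$ by recognising $f$ itself in concatenated form with tail $f_T$. Your explicit treatment of the vanishing-denominator case via coprimality of $n,m$ is a welcome addition that the paper leaves implicit.
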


\begin{remark}\label{rem:final}
Let $\mathcal H$ be the set of (two-segment) concatenated functions $$p(t)=F(t)\textbf{1}_{[0,T)}(t)+G(t-T)\textbf{1}_{[T,\infty)}(t)$$  with
$F,G\in\mathcal D$, $T>0$  and $F\not=0$.
Given any nonzero function $f\in \mathcal D$, let $$\mathcal H_f=\{p\in\mathcal H: H_{n,m}(p,\cdot)=H_{n,m}(f,\cdot)\}.$$
We would like to know what the set $\mathcal H_f$ is. To this purpose, we introduce
$$\mathcal D_f=\left\{p(t)=f(t)\textbf{1}_{[0,T)}(t)+G(t-T)\textbf{1}_{[T,\infty)}(t): p\in \mathcal H, p=f \text{\,or\,} \frac{\widehat{f_T^n}(\cdot)-\widehat{G^n}(\cdot)}{\widehat{f_T^m}(\cdot)-\widehat{G^m}(\cdot)}
=H_{n,m}(f,\cdot)\right\}.$$
Then using Proposition \ref{prop4.2} and \cite[Lemma 2.1, Corollary 2.3]{konstantopoulos2021does} we have
$\mathcal H_f= \mathcal D_f$ when $n-m$ is odd, and $\mathcal H_f= \mathcal D_f\bigcup(-\mathcal D_f)$ when $n-m$ is even. Here $-\mathcal D_f=\{-p: p\in \mathcal D_f\}.$ Therefore it suffices to determine $\mathcal D_f$ in order to find $\mathcal H_f$.
\end{remark}

Next we study an example for which $p\neq q$ and $H_{n,m}(p,\cdot)=H_{n,m}(q,\cdot).$
\begin{example}
Let $f(t)=e^{-at}$  for some $a\neq 0$.
Take any  function $p\not=f$ from the set $\mathcal D_f$.
Then  the function
 $p(t)=f(t)\textbf{1}_{[0,T)}(t)+G(t-T)\textbf{1}_{[T,\infty)}(t)$   satisfies
$$\frac{\widehat{f_T^n}(\lambda)-\widehat{G^n}(\lambda)}{\widehat{f_T^m}(\lambda)-\widehat{G^m}(\lambda)}=H_{n,m}(f,\lambda)=\frac {\lambda+ma}{\lambda+na}.$$
The above entails that $\lambda \widehat{G^n}(\lambda)-\lambda\widehat{G^m}(\lambda)+na\widehat{G^n}(\lambda)-ma\widehat{G^m}(\lambda)=e^{-naT}-e^{-maT}$, which is  equivalent to
  \begin{equation}\label{eqap1}
   n\widehat{G^{n-1}G'}(\lambda)- m\widehat{G^{m-1}G'}(\lambda)+na\widehat{G^n}(\lambda)-ma\widehat{G^m}(\lambda)=e^{-naT}-e^{-maT}+G^m(0)-G^n(0),
   \end{equation}
 by noting that $\lambda\widehat{G^n}(\lambda)=n\widehat{G^{n-1}G'}(\lambda)+G^n(0)$ and $\lambda\widehat{G^m}(\lambda)=m\widehat{G^{m-1}G'}(\lambda)+G^m(0)$.
 The right-hand side of  (\ref{eqap1}) is constant,
 the left side tends to zero as  $\lambda\to \infty$.
 Therefore both sides of (\ref{eqap1}) are constant zero, which implies
   $$ (nG^{n-1}(t)-mG^{m-1}(t))(G'(t)+aG(t))=e^{-naT}-e^{-maT}+G^m(0)-G^n(0)=0$$
   for all $t\ge 0$. Thus
   $G(t)=ce^{-at}$  for $t\ge 0$ with $c$ satisfying  $c^n-c^m=e^{-naT}-e^{-maT}$,
   or $G(t)=b$ for  $t\ge 0$ with $b$ satisfying  $b^n-b^m=e^{-naT}-e^{-maT}$ and $nb^{n-m}=m$.
   We are now ready to state $\mathcal D_f$ which is related to $\mathcal H_f$ as described in Remark \ref{rem:final}.
   Setting $$\mathcal G_a=\Big\{p(t)=e^{-at}\left(\textbf{1}_{[0,T)}(t)+ce^{aT}\textbf{1}_{[T,\infty)}(t)\right): T>0,c^n-c^m=e^{-naT}-e^{-maT}\Big\},$$
 and using Lemma \ref{lemma2.1}, we get that
  $$\mathcal D_f=\left\{
                   \begin{array}{ll}
                     \mathcal G_a, & \hbox{ $a<0$,  $n-m$ is odd;} \\
                     \mathcal G_a\cup\{p(t)=e^{-at}\textbf{1}_{[0,T_1)}(t)-b_0\textbf{1}_{[T_1,\infty)}(t)\}, & \hbox{ $a<0$,  $n$ and $m$ are odd;} \\
                     \mathcal G_a\cup \{p(t)=e^{-at}\textbf{1}_{[0,T_2)}(t)+b_0\textbf{1}_{[T_2,\infty)}(t)\}, & \hbox{ $a>0$;}
                   \end{array}
                 \right.
  $$
where $b_0=x_0$, $T_2=\frac {1}{(n-m)a}\ln \frac nm$ and $T_1>0$ satisfying $b^m_0-b^n_0=e^{-naT_1}-e^{-maT_1}$.
When $n$  is odd and $m$ is even, $\mathcal G_a(=\mathcal H_f)=\{f\}$ due to Remark \ref{rem:final} and Lemma \ref{lemma2.1}. That means if $H_{n,m}(p,\cdot)=H_{n,m}(q,\cdot),$ then $p=q=f.$
This  result is consistent with  Theorem \ref{theorem1.1}-(3).

This toy example shows that even in this simple situation, $p$ may not be unique and can take quite complicated forms.

\end{example}

\bibliographystyle{abbrv}
	\bibliography{Ref}

\vspace*{1cm}

\noindent
\begin{minipage}{\textwidth}
\begin{minipage}{9cm}
Dongdong Hu\\
Department of Financial and Actuarial Mathematics\\
Xi'an Jiaotong - Liverpool University\\
111 Ren'ai Road, Suzhou, P.\ R.\ China 215123\\
\href{mailto:dongdong.hu20@student.xjtlu.edu.cn}{dongdong.hu20@student.xjtlu.edu.cn}
\end{minipage}
\begin{minipage}{7cm}
Linglong Yuan\\
Department of Mathematical Sciences\\
University of Liverpool\\
Liverpool  L69 7ZL, UK\\
\href{mailto:linglong.yuan@liverpool.ac.uk}{linglong.yuan@liverpool.ac.uk}
\end{minipage}
\begin{minipage}{9cm}
Minzhi Zhao\\
School of Mathematical Sciences\\
Zhejiang University\\
866 Yuhangtang Rd, Hangzhou, P. R. China 310058\\
\href{zhaomz@zju.edu.cn}{zhaomz@zju.edu.cn}
\end{minipage}

\end{minipage}

\end{document}